\theoremstyle:=definition,remark,plain\do{%
        \expandafter\g@addto@macro\csname th@\theoremstyle\endcsname{%
            \addtolength\thm@preskip\parskip
            }%
        }
\newtheorem{theorem}{Theorem}[section]
\newtheorem*{theorem*}{Theorem}
\newtheorem{lemma}[theorem]{Lemma}
\newtheorem{prop}[theorem]{Proposition}
\newtheorem{prob}[theorem]{Problem}
\newtheorem{conj}[theorem]{Conjecture}
\newtheorem{que}[theorem]{Question}
\theoremstyle{definition}
\newtheorem{definition}[theorem]{Definition}
\newtheorem*{remark*}{Remark}
\newtheorem{claim}[theorem]{Claim}
\newcommand{\df}[1]{{{\color{blue!50!black}\em #1}}}
\DeclareMathOperator{\conv}{conv}
\DeclareMathOperator{\intr}{int}
\DeclareMathOperator{\Vol}{Vol}
\newcommand{\cceil}[1]{\left\lceil {#1} \right\rceil}
\newcommand{\ffloor}[1]{{\left\lfloor {#1} \right\rfloor}}
\begin{document} 

\title{The layer number of $\alpha$-evenly distributed point sets}
\author{
	Ilkyoo Choi\thanks{
Supported by the Basic Science Research Program through the National Research Foundation of Korea (NRF) funded by the Ministry of Education (NRF-2018R1D1A1B07043049), and also by the Hankuk University of Foreign Studies Research Fund.
Department of Mathematics, Hankuk University of Foreign Studies, Yongin-si, Gyeonggi-do, Republic of Korea.
\texttt{ilkyoo@hufs.ac.kr}
	}
	\and
	Weonyoung Joo\thanks{
		Department of Industrial and Systems Engineeing, KAIST, Daejeon, Republic of Korea.
		\texttt{es345@kaist.ac.kr}
	}
	\and
	Minki Kim\thanks{
		Partially supported by ISF grant no. 2023464 and BSF grant no. 2006099.
		Department of Mathematics, Technion -- Israel Institute of Technology, Haifa, Israel.
		\texttt{kimminki@technion.ac.il}
	}
}
\date\today
\maketitle

\begin{abstract}
	For a finite point set in $\mathbb{R}^d$, we consider a peeling process where the vertices of the convex hull are removed at each step.
	The layer number $L(X)$ of a given point set $X$ is defined as the number of steps of the peeling process in order to delete all points in $X$.
	It is known that if $X$ is a set of random points in $\mathbb{R}^d$, then the expectation of $L(X)$ is $\Theta(|X|^{2/(d+1)})$, and recently it was shown that if $X$ is a point set of the square grid on the plane, then $L(X)=\Theta(|X|^{2/3})$.
	
	In this paper, we investigate the layer number of $\alpha$-evenly distributed point sets for $\alpha>1$; these point sets share the regularity aspect of random point sets but in a more general setting. 
The set of lattice points is also an $\alpha$-evenly distributed point set for some $\alpha>1$. 
We find an upper bound of $O(|X|^{3/4})$ for the layer number of an $\alpha$-evenly distributed point set $X$ in a unit disk on the plane for some $\alpha>1$, and provide an explicit construction that shows the growth rate of this upper bound cannot be improved.
	In addition, we give an upper bound of $O(|X|^{\frac{d+1}{2d}})$ for the layer number of an $\alpha$-evenly distributed point set $X$ in a unit ball in $\mathbb{R}^d$ for some $\alpha>1$ and $d\geq 3$.
\end{abstract}

%%%%%%%%%%%%%%%%%%%%%%%%%%%%%%%%%%%%%%%%%%%%%%%%%%%%%%%%%%%%%%%%%%%%%%%%%%%%

\section{Introduction}\label{sec:intro}

Let $\Vol(R)$ be the volume function of a region $R$ in $\mathbb{R}^d$.
	For a finite point set $X$ in $\mathbb{R}^d$, let $\conv(X)$ denote the convex hull of $X$ and let $V(X)$ denote the \df{convex layer} of $X$, which is the set of extreme points of $X$.
	We consider a ``peeling process'' of $X$ that removes the convex layer at each step.
Given a point set $X$, the \df{peeling sequence} $\{X_i\}_i$ of $X$ is the sequence of subsets of $X$ such that $X_i$ is the set of remaining vertices at step $i-1$;
	in other words, $X_1 =X$ and  $X_{i+1} = X_i\setminus V(X_i)$ for $i\geq1$.
See Figure~\ref{fig:peeling} for an illustration.

\begin{figure}[h]
    \centerline{\includegraphics[scale=0.9]{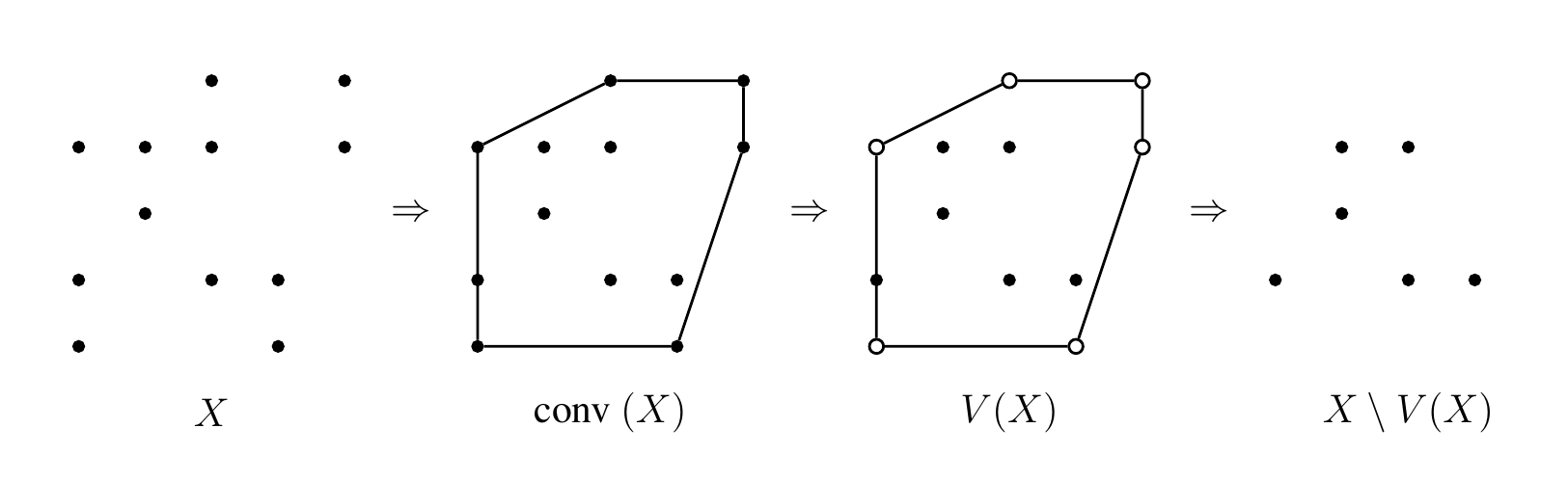}}
    \caption{One step of the peeling process of $X$.}
    \label{fig:peeling}
\end{figure}

	The \df{layer number} $L(X)$ of $X$ is the number of steps the peeling process takes to remove all points from $X$; namely, $L(X):=\min\{k:X_{k+1}=\emptyset\}$.
	An optimal deterministic algorithm to find $L(X)$ with running time $O(|X| \log (|X|))$ was discovered in~\cite{Cha85}. 

Note that $L(X)=1$ if and only if $X$ is a point set in convex position.
It is not hard to see that $1\leq L(X)\leq \lceil|X|/2\rceil$.
The upper bound is sharp, as it comes from points on a single straight line.
See the illustration in Figure~\ref{fig:ex}.

\begin{figure}[htbp]
\centering
\begin{tikzpicture}
\tikzstyle{c}=[circle, draw, solid, fill=black, inner sep=0pt, minimum width=3pt]
\tikzstyle{d}=[diamond, draw, solid, fill=white, inner sep=2pt, minimum width=2pt]
\begin{scope}[thick, every node/.style={sloped,allow upside down}]
	\node[c] at (-3,0){};
	\node[c] at (-2,0){};
	\node[c] at (-1,0){};
	\node[c] at (0,0){};
	\node[c] at (1,0){};
	\node[c] at (2,0){};
	\node[c] at (3,0){};
	\draw[dotted] (-4,0)--(4,0);
	
	\node at (0,1){}; \node at (0,-1){};
\end{scope}
\end{tikzpicture}
\caption{An example of a point set $X$ where $L(X)=\lceil|X|/2\rceil$.}
\label{fig:ex}
\end{figure}

	Finding an asymptotic bound on the layer number of a class of point sets is an intriguing research problem; in particular, discovering a class of point sets where every point set $X$ in the class satisfies $L(X)\leq o(|X|)$ or even $L(X)\leq O(|X|^{1-\epsilon})$ for some $\epsilon>0$ is noteworthy.

	\begin{prob}\label{problem}
		Let $\mathcal{C}$ be a class of point sets in $\mathbb{R}^d$.
		Find the maximum value of $\epsilon>0$ such that $L(X)\leq O(|X|^{1-\epsilon})$ for every finite point set $X\in\mathcal{C}$.
	\end{prob}
	Recently, Problem~\ref{problem} was solved for random point sets~\cite{Dal04} and the square grid~\cite{HPL13}.
	
	In \cite{Dal04}, it was shown that for a bounded region $R$ with non-empty interior in $\mathbb{R}^2$, the expected layer number for an $n$-vertex point set $X$ distributed independently and uniformly in $R$ is $\Theta(n^{2/3})$.
Furthermore, the author of \cite{Dal04} proved the bound is $\Theta(n^{2/(d+1)})$ when $X$ is a set of points in $\mathbb{R}^d$.
	The main result in \cite{HPL13} is that the layer number is $\Theta(n^{2/3})$ for a set of points on the $\cceil{ \sqrt{n} } \times \cceil{ \sqrt{n} }$ square grid.
	It would be remarkable to find the layer number of the $\cceil{ n^{1/d} } \times \cdots \times \cceil{ n^{1/d} }$ $d$-dimensional square grid for each $d\geq 3$; this future direction was also mentioned in~\cite{HPL13}.
	
	Regarding the results in \cite{Dal04} and \cite{HPL13}, it is interesting that the bounds coincide to $\Theta(n^{2/3})$ for the case of an $n$-vertex point set on the plane.
	However, it may not be surprising because the bounds for the number of points in convex position are the same.
	Note that the layer number $L(X)$ of a point set $X$ is bounded below by $|X|$ divided by the maximum number of points in convex position.
	In \cite{Bar89}, it was shown that for an $n$-vertex point set $X$ distributed independently and uniformly in a convex body, the expected value  of $|V(X)|$ is $O(n^{(d-1)/(d+1)})$.
	This result is the core of the proof of the lower bound of $\Omega(n^{2/(d+1)})$ in \cite{Dal04}.
	The expected number of $|V(X)|$ is also $O(n^{(d-1)/(d+1)})$ for grids, as shown in~\cite{Sch88}.
	See \cite{And63, BB91, BL98, Bar08} for an overview and more information on the number of points of the convex layer of sets of lattice points.
Also see \cite{RS63, Sch88, Bar08} for similar results regarding random point sets.

\subsection{Evenly distributed point sets}\label{sec:even}

	Besides sets of random points or sets of lattice points, it is worth considering the layer number of ``evenly distributed'' point sets, that is, point sets that are almost uniformly distributed in a given region.	
In particular, an evenly distributed point set is ``locally'' not too dense.

\begin{definition}
	Let $X$ be a finite point set in a unit ball in $\mathbb{R}^d$.
	For a constant $\alpha > 1$, we say $X$ is \df{$\alpha$-evenly distributed} if	
	\[ |X\cap D|  \leq  \Big\lceil \alpha|X|\Vol(D) \Big\rceil \]
	holds for every Euclidean ball $D$ with positive volume.
\end{definition}

The above definition, roughly speaking, says that when the volume of $D$ is large, then the number of points of $X$ in $D$ is proportional to the volume of $D$, and when the volume of $D$ is small, then the number of points of $X$ in $D$ is bounded above by a constant. 

	In this paper, we investigate Problem~\ref{problem} for $\alpha$-evenly distributed point sets in $\mathbb{R}^d$. 
We succeed in determining the maximum value of $\epsilon$ in Problem~\ref{problem} when $d=2$. 
It was surprising to discover that the behavior of an evenly distributed point set is quite different from that of the square grid.
We also obtain results for higher dimensions, but unfortunately we could not prove that the growth rate is tight. 

	Given a positive integer $d$ and a positive real number $\alpha$, let $\mathcal{C}_d(\alpha)$ be the class of all $\alpha$-evenly distributed point sets in a unit ball in $\mathbb{R}^d$.
	Our main results are the following:
	\begin{theorem}\label{thm1.2}
	For every real number $\alpha > 1$, 
	if $X$ is a point set in the class $\mathcal{C}_2(\alpha)$, then $L(X) \leq O(|X|^{3/4})$, and 
	the growth rate cannot be improved.
	\end{theorem}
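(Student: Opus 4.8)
I would prove the upper bound and the matching lower bound separately. For the upper bound the strategy is: extract two elementary ``local density'' consequences of $\alpha$-even distribution, combine them with an elementary observation about what one peeling step does to a halfplane, and then run a scale-by-scale argument on the shrinking convex hull. For the lower bound I would exhibit an explicit configuration for which one can argue directly that every convex layer is small.

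\textbf{Local facts (the easy part).} Write $n=|X|$ and let $B$ be the unit disk. If $D$ is a ball with $\alpha n\Vol(D)<1$ then $|X\cap D|\le 1$; applying this to a ball of radius $\tfrac12\|p-q\|$ shows any two points of $X$ are at distance $\gtrsim_\alpha n^{-1/2}$. Since each convex layer $V(X_i)$ is the vertex set of a convex polygon inscribed in $B$, its edges have length $\gtrsim n^{-1/2}$ and its perimeter is $\le 2\pi$, so $|V(X_i)|=O_\alpha(\sqrt n)$ for every $i$; the same separation gives that any ball of radius $r$, and more generally any width-$w$ shell around a convex curve of length $p$ inside $B$, contains $O_\alpha(nr^{2}+1)$, respectively $O_\alpha(npw+p\sqrt n)$, points of $X$. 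In particular a circular cap of $B$ of height $t$ (being inside a ball of radius $O(\sqrt t)$) contains $O_\alpha(nt)$ points. The second ingredient is the peeling observation: for a closed halfplane $H$ with outward unit normal $u$, as long as $X_i$ meets $\intr H$, a point of $X_i$ maximizing $\langle\cdot,u\rangle$ lies in $\intr H$ and is a vertex of $\conv(X_i)$ (or an endpoint of the extremal edge is), so it is deleted; hence after $|X\cap\intr H|$ steps $X_i\cap\intr H=\emptyset$. Applying this to all halfplanes tangent to $(1-t)B$ shows $\conv(X_i)\subseteq(1-t)B$ after $O_\alpha(nt)$ steps, and once $\conv(X_i)$ has diameter $\le n^{-1/4}$ there are at most $O_\alpha(\sqrt n)$ further layers. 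Summing the cap bound over a dyadic sequence of $t$'s only gives $O(n)$, so this alone is insufficient; it will be used for the outermost scale and for the end-game.

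\textbf{The crux.} What remains is to control the rate at which $\conv(X_i)$ shrinks on the intermediate scales, and this is where the exponent $3/4$ is decided and where I expect the real difficulty to lie. A clean sufficient statement is: \emph{over any block of $k$ consecutive steps the perimeter of $\conv(X_i)$ drops by $\Omega_\alpha(k/n^{3/4})$}; granting it, the telescoping bound $\mathrm{per}(\conv X_1)-\mathrm{per}(\conv X_{L+1})\le 2\pi$ forces $L=O_\alpha(n^{3/4})$. To prove such a bound I would fix a block, set $K=\conv(X_i)$, $K'=\conv(X_{i+k})$, and use that the $\ge k$ layers deleted in between are $\gtrsim n^{-1/2}$-separated points lying in the shell $K\setminus K'$: if ``too many'' of these steps each shrank the perimeter by less than $\varepsilon$, then throughout the block $\partial(\conv X_i)$ stays within Hausdorff distance $O(\sqrt{\varepsilon})$ of $\partial K$ (a bump of height $h$ on a convex curve of bounded diameter adds $\gtrsim h^2$ to the perimeter), so the deleted points are confined to a shell of width $O(\sqrt{\varepsilon})$ around a curve of length $O(1)$; the shell-counting bound above then caps the number of layers unless $\varepsilon\gtrsim n^{-3/4}$. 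Balancing this ``thin shell'' regime against the regime where the current hull is a polygon with very few vertices (where the deleted points instead march inward a total distance $O(1)$ along $O(1)$ ``corners'', again $\gtrsim n^{-1/2}$-separated) is exactly what yields the power $3/4$; I anticipate the main technical effort is making this dichotomy uniform across all scales and across all blocks, and correctly handling the last few layers via the end-game remark.

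\textbf{The lower bound construction.} To show $n^{3/4}$ is best possible I would build an explicit $X$ of size $\Theta(n)$ inside $B$ that is $\alpha$-evenly distributed for a suitable constant $\alpha$ and in which every convex layer has only $\Theta(n^{1/4})$ points, forcing $L(X)=\Omega(n^{3/4})$. The idea is to superimpose $\Theta(n^{3/4})$ nested convex curves carrying $\Theta(n^{1/4})$ points each, placed at radii differing by $\Theta(n^{-3/4})$ and with a rotational drift between consecutive curves chosen to be a badly approximable (e.g. golden-ratio) multiple of the angular point-spacing. The two things to verify pull against each other: on one hand one needs the minimum interpoint distance to be $\Omega(n^{-1/2})$ so that $X\in\mathcal C_2(\alpha)$ — this follows from the three-distance estimate $\|k\psi\|\ge c/k$ for $\psi=(\sqrt5-1)/2$ combined with the radial gaps $\gtrsim |j-j'|\,n^{-3/4}$, optimising over $|j-j'|$ — and the corresponding ball count $O(n\Vol(D))$ then holds at every scale; on the other hand one needs the peeling to stay ``slow'', i.e. that after removing the outer layers the remaining configuration still has only $\Theta(n^{1/4})$-vertex convex hulls. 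Getting both simultaneously, with honest constants, is the delicate point of the construction, and I expect it to require either a careful perturbation of the above model or a direct lower-bound argument (in the spirit of the random-point argument of~\cite{Dal04}) showing the layer sizes remain $O(n^{1/4})$ for $\Omega(n^{3/4})$ steps rather than tracking the layers one by one.
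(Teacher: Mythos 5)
Your upper bound loses the theorem at the exact moment you discard the shell-by-shell argument. The bound $O_\alpha(nt)$ for a cap of height $t$ (via the enclosing ball of radius $O(\sqrt t)$) is far from sharp: such a cap is a sliver of height $t$ and chord length $O(\sqrt t)$, and your own covering estimate (tile it by $O(\sqrt{t}\, n^{1/2})$ squares of side $n^{-1/2}$, each holding $O_\alpha(1)$ points) gives $O_\alpha(\sqrt{tn}+nt^{3/2})$ points, which for $t=n^{-1/2}$ is $O_\alpha(n^{1/4})$, not $O(\sqrt n)$. With this count, peeling $\sqrt n$ concentric annuli of width $n^{-1/2}$ --- each cleared after at most $\max_C|X\cap C|=O_\alpha(n^{1/4})$ steps, by exactly your halfplane observation applied to every tangent line of the inner disk (each nonempty cap loses an extreme point at every step, and the caps cover the annulus) --- gives $L(X)\le \sqrt n\cdot O(n^{1/4})+O(1)=O(n^{3/4})$. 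This is the paper's proof (Lemmas~\ref{lem2.3} and~\ref{lem3.1}); the exponent $3/4$ comes from the $n^{-1/2}\times n^{-1/4}$ geometry of the annular cap, not from any perimeter-decrement dichotomy. Your proposed crux lemma (perimeter drops by $\Omega(k/n^{3/4})$ over any $k$ consecutive steps) is left unproved, and the sketch for it is not convincing: the ``thin shell'' step must be made uniform over all intermediate hulls and must handle blocks where the current hull is a polygon with few vertices. As written the upper bound is incomplete, and unnecessarily so.

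For the lower bound, your construction has a real problem that you only partly acknowledge. With $\Theta(n^{1/4})$ points on a circle of radius $\Theta(1)$, the convex hull of one ring dips a depth $\Theta(n^{-1/2})$ below the circle at edge midpoints, while the next ring is only $\Theta(n^{-3/4})$ further in; hence points from the next $\Theta(n^{1/4})$ rings protrude through the outer hull, the first layer has $\Theta(\sqrt n)$ points rather than $\Theta(n^{1/4})$, and the ``one ring per layer'' mechanism collapses. The paper avoids this entirely: it nests only $\Theta(\sqrt n)$ regular $n^{1/4}$-gons at radial spacing $\Theta(n^{-1/2})$, so that each polygon lies strictly inside the midpoint-polygon of the next (Claim~\ref{claim3.4}) and is untouched until the outer one is completely removed, and it places $\Theta(j)$ \emph{collinear} points at spacing $\Theta(n^{-1/2})$ along the edges of the $j$-th polygon. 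Collinear points on an edge peel at most two per step, so the $j$-th polygon alone costs $\Omega(j/n^{1/4})$ layers, and $\sum_j j/n^{1/4}=\Omega(n^{3/4})$. The slow peeling comes from collinearity along edges, not from a large number of rings; that is the idea your sketch is missing, and without it (or a genuinely different argument) the matching lower bound is not established.
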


	\begin{theorem}\label{thm1.3}
	For every real number $\alpha > 1$ and every integer $d \geq 3$, 
	if $X$ is a point set in the class $\mathcal{C}_d(\alpha)$, then $L(X) \leq O(|X|^{\frac{d+1}{2d}})$.
	\end{theorem}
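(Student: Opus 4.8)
The plan is to extend the ``peel in concentric shells'' idea. Let $B_r$ denote the ball of radius $r$ concentric with the unit ball that contains $X$, so that $X\subseteq B_1$; write $\kappa_d=\Vol(B_1)$, and assume $|X|$ is large in terms of $d$ and $\alpha$, as otherwise $L(X)=O_{d,\alpha}(1)$. Put $h_0:=\tfrac12(\alpha\kappa_d|X|)^{-1/d}$, which is chosen precisely so that every Euclidean ball $D$ of radius $2h_0$ contains at most $\lceil\alpha|X|\Vol(D)\rceil=\lceil\alpha|X|\kappa_d(2h_0)^d\rceil=1$ point of $X$. I will peel in phases: the $j$-th phase starts once the surviving points lie in $B_{r_j}$, where $r_0=1$ and $r_{j+1}=r_j-h_0$, and it runs until they lie in $B_{r_{j+1}}$. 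Once $r_j<2h_0$ the survivors lie in a ball of radius $<2h_0$ and hence number at most one, so one more step finishes; thus there are at most $\lceil 1/h_0\rceil$ phases.

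The core estimate bounds the length of one phase. Fix a phase with current set $Y\subseteq B_r$ (with $r\ge 2h_0$), and for a unit vector $u$ let $C_u=\{x\in B_r:\langle x,u\rangle\ge r-h_0\}$ be the cap of height $h_0$. First, whenever $Y_t\cap C_u\neq\emptyset$, a vertex of $\conv(Y_t)$ lies in $C_u$: a maximizer of $\langle\cdot,u\rangle$ over $Y_t$ has inner product $\ge r-h_0$ and norm $\le r$, hence lies in $C_u$, and at least one such maximizer is a vertex of $\conv(Y_t)$; so that step of the peeling deletes a point of $Y_t\cap C_u$. Since $C_u$ is fixed and the $Y_t$ are nested, the steps with $Y_t\cap C_u\neq\emptyset$ form an initial block of length at most $|Y\cap C_u|\le|X\cap C_u|$. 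Second, $|X\cap C_u|$ is small: the cap is contained in a cylinder of height $h_0$ over a $(d-1)$-ball of radius $\sqrt{2rh_0}$, which a standard net bound covers by $O_d\big((r/h_0)^{(d-1)/2}\big)$ balls of radius $2h_0$, each holding at most one point of $X$; hence $|X\cap C_u|=O_d\big((r/h_0)^{(d-1)/2}\big)$, uniformly in $u$. Finally, if some $q\in Y_t$ had $\|q\|>r-h_0$ then $Y_t\cap C_{q/\|q\|}\neq\emptyset$; so after $M(r):=O_d\big((r/h_0)^{(d-1)/2}\big)$ steps the survivors lie in $B_{r-h_0}$, i.e.\ the phase has length at most $M(r_j)$.

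Summing over the at most $\lceil 1/h_0\rceil$ phases and using $M(r_j)\le M(1)=O_d\big(h_0^{-(d-1)/2}\big)$ yields
\[
L(X)\;\le\;1+\Big\lceil\tfrac{1}{h_0}\Big\rceil\cdot O_d\big(h_0^{-(d-1)/2}\big)\;=\;O_d\big(h_0^{-(d+1)/2}\big)\;=\;O_d\big((\alpha\kappa_d|X|)^{(d+1)/(2d)}\big)\;=\;O_{\alpha,d}\big(|X|^{(d+1)/(2d)}\big),
\]
which is the asserted bound; the same argument with $d=2$ reproves the upper bound of Theorem~\ref{thm1.2}.

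The step I expect to be most delicate — and the one that genuinely departs from the planar heuristic — is the cap count. One cannot apply the evenly-distributed hypothesis to a cap directly, because a thin cap is a union of many balls, and at the relevant scale $h_0\approx|X|^{-1/d}$ each of those balls may still carry a point; it is exactly this $(r/h_0)^{(d-1)/2}$ blow-up, rather than an $O(1)$ bound per cap, that makes a phase potentially long and produces the exponent $(d+1)/(2d)$ (instead of the smaller $2/(d+1)$ one might first hope for). One must also check that choosing $h_0$ of order $|X|^{-1/d}$ is the right balance between the number of phases and the cost per phase, and that the bookkeeping degenerates gracefully in the last few phases, where $r_j$ is comparable to $h_0$.
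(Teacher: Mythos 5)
Your proposal is correct and follows essentially the same strategy as the paper: peel through $\Theta(|X|^{1/d})$ concentric shells of thickness $\Theta(|X|^{-1/d})$, bound the number of points in each cap by covering it with $O(|X|^{(d-1)/(2d)})$ balls small enough to contain $O(1)$ points each, and use the fact that every peeling step removes a point from each nonempty cap (your inline argument here is exactly the paper's Lemma~\ref{lem2.3}). The only cosmetic difference is that the paper leaves the shell thickness as a parameter $n^{-k}$ and optimizes to find $k=1/d$, whereas you fix that scale from the outset.
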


	In Section~\ref{sec:2dup}, we prove Theorem~\ref{thm1.2} by further developing ideas from \cite{Dal04}.
	In Section~\ref{sec:2dlow}, we give an explicit construction showing that the bound of $O(|X|^{3/4})$ cannot be improved, and hence confirm that $\epsilon=\frac{1}{4}$ is the answer of Problem~\ref{problem} for evenly distributed point sets on the plane.
	In Section~\ref{sec:higher}, we prove an analogue of Theorem~\ref{thm1.2} to higher dimensions, which gives a partial solution of $\epsilon\geq\frac{d-1}{2d}$ to Problem~\ref{problem} for evenly distributed point sets in $\mathbb{R}^d$ where $d\geq 3$.
	We finish the paper with remarks, discussion on future research directions, and open problems in Section~\ref{sec:rmk}.

	\smallskip

%%%%%%%%%%%%%%%%%%%%%%%%%%%%%%%%%%%%%%%%%%%%%%%%%%%%%%%%%%%%%%%%%%%%%%%%%%%%

% Preliminaries - Useful/Key lemmas

\section{Preliminaries}\label{sec:prelim}
We prove some lemmas that will be used in the proofs of the main theorems. 
The statement of the following lemma also appeared in \cite{Dal04}; we include a proof here for completeness. 

\begin{lemma}\label{lem2.1}
	Assume $X$ is a finite point set in $\mathbb{R}^d$.
	\begin{enumerate}[$(a)$]
		\item If $Y\subseteq X$, then $Y\setminus V(Y)\subseteq X\setminus V(X)$.
		\item If $Y\subseteq X$, then $L(Y)\leq L(X)$.
	\end{enumerate}
\end{lemma}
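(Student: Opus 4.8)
The plan is to prove part $(a)$ directly from the definition of an extreme point together with the monotonicity of the convex hull operator, and then to deduce part $(b)$ from $(a)$ by a short induction along the two peeling sequences.

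For $(a)$, I would argue via the characterization $p \in V(S)$ if and only if $p \notin \conv(S \setminus \{p\})$. Take a point $p \in Y$ that does \emph{not} lie in $V(Y)$; then $p \in \conv(Y \setminus \{p\})$. Since $Y \subseteq X$ we have $Y \setminus \{p\} \subseteq X \setminus \{p\}$, and as the convex hull is monotone under inclusion this gives $\conv(Y \setminus \{p\}) \subseteq \conv(X \setminus \{p\})$, hence $p \in \conv(X \setminus \{p\})$. By the same characterization this means $p \notin V(X)$, and together with $p \in Y \subseteq X$ we conclude $p \in X \setminus V(X)$. Thus every point of $Y \setminus V(Y)$ lies in $X \setminus V(X)$, which is the claimed inclusion.

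For $(b)$, let $\{X_i\}_i$ and $\{Y_i\}_i$ be the peeling sequences of $X$ and $Y$. I would show $Y_i \subseteq X_i$ for every $i \geq 1$ by induction on $i$: the base case $Y_1 = Y \subseteq X = X_1$ is the hypothesis, and if $Y_i \subseteq X_i$ then applying part $(a)$ to the pair $Y_i \subseteq X_i$ yields $Y_{i+1} = Y_i \setminus V(Y_i) \subseteq X_i \setminus V(X_i) = X_{i+1}$. Now set $k = L(X)$, so that $X_{k+1} = \emptyset$; then $Y_{k+1} \subseteq X_{k+1} = \emptyset$, so the peeling process on $Y$ has terminated by step $k$, i.e. $L(Y) \leq k = L(X)$.

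I do not expect a genuine obstacle here; the only points needing a little care are using the right characterization of an extreme point (namely $p \in V(S) \iff p \notin \conv(S\setminus\{p\})$, so that monotonicity of $\conv$ can be applied) and making sure the induction step in $(b)$ invokes $(a)$ on the correct nested pair $Y_i \subseteq X_i$ at each stage.
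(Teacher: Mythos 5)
Your proof is correct and follows essentially the same route as the paper: part $(a)$ establishes the inclusion $Y\cap V(X)\subseteq V(Y)$ (you prove its contrapositive form), and part $(b)$ is the identical induction showing $Y_i\subseteq X_i$ along the peeling sequences. The only cosmetic difference is that you certify extremality via the characterization $p\in V(S)\iff p\notin\conv(S\setminus\{p\})$ and monotonicity of $\conv$, whereas the paper uses a supporting hyperplane through the extreme point; both are valid and interchangeable here.
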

\begin{proof}
$(a)$
If no point of $Y$ is in $V(X)$, then $Y\setminus V(Y)\subseteq Y\subseteq X\setminus V(X)$.
Thus, we may assume that $V(X)\cap Y\neq\emptyset$, and let $v\in V(X)\cap Y$.
Since $v\in V(X)$, there is a hyperplane $H$ such that $H\cap X=\{v\}$ and $H$ divides $\mathbb{R}^d$ into two closed halfspaces $H^+$ and $H^-$ where $H^+\cap X=\{v\}$ and $H^-\cap X=X$.
See Figure~\ref{fig:lem21} for an illustration. 
Since $Y\subseteq X$ and $v\in Y$, it follows that $H^+\cap Y=\{v\}$ and $H^-\cap Y=Y$.
Hence, $v$ is also in $ V(Y)$.
		This shows that $Y\cap V(X)\subseteq V(Y)$.
		Therefore, $Y\setminus V(Y)\subseteq Y\setminus (Y\cap V(X))=Y\setminus V(X) \subseteq X\setminus V(X)$.

\begin{figure}[h]
    \centerline{\includegraphics[scale=0.8]{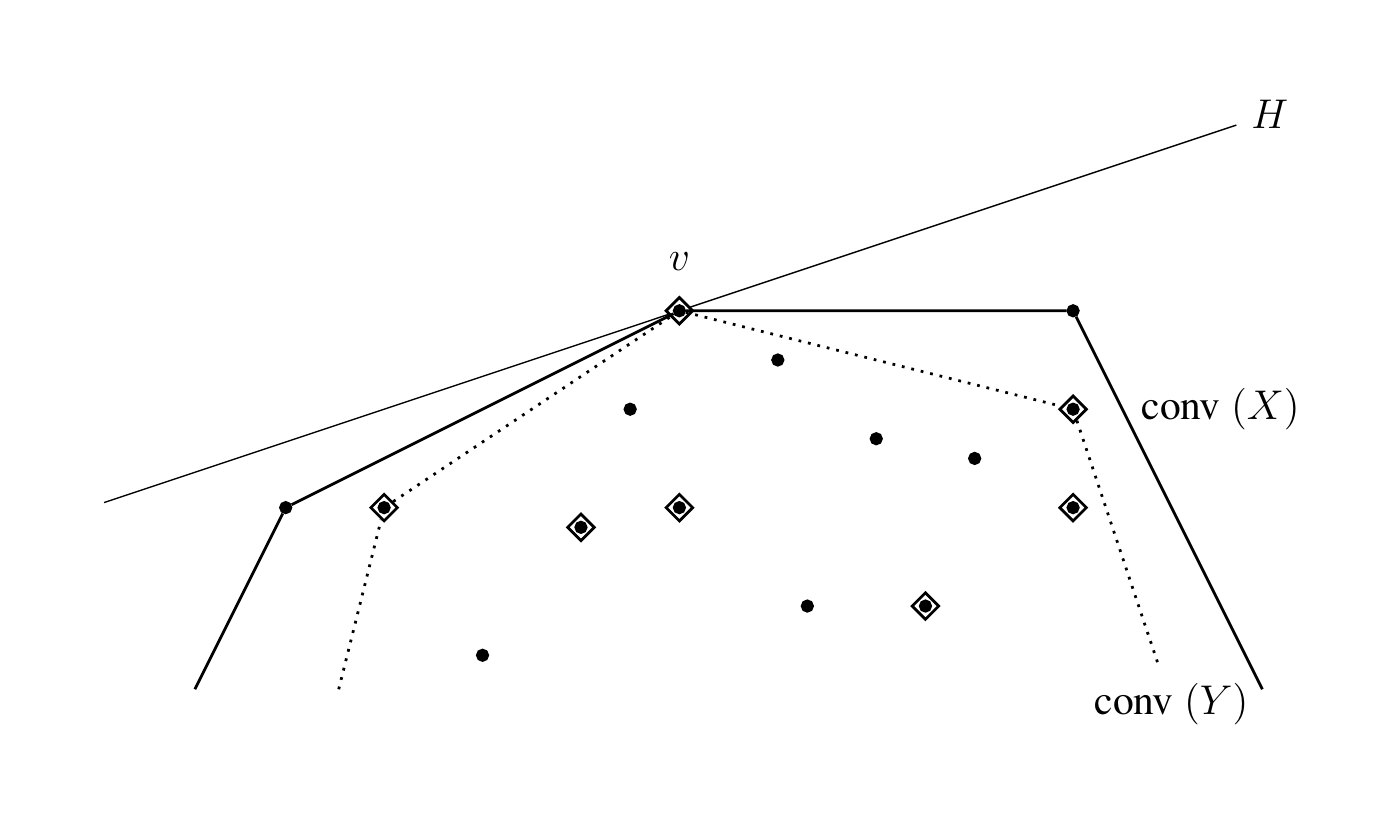}}
    \caption{For Lemma~\ref{lem2.1}~$(a)$. Dots represent points in $X$, and diamonds represent points in $Y$. Note that $Y\subseteq X$.} \label{fig:lem21}
\end{figure}

		$(b)$ 
Let $\{X_i\}_i$ and $\{Y_i\}_i$ be the peeling sequence of $X$ and $Y$, respectively.
		We will show that $Y_k\subseteq X_k$ for every $k\geq1$ by induction on $k$.
		For $k=1$, it is true since $Y\subseteq X$.
		Assume $k\geq2$ and $Y_{k-1}\subseteq X_{k-1}$ holds.
		By $(a)$, we know $Y_{k-1}\setminus V(Y_{k-1})\subseteq X_{k-1}\setminus V(X_{k-1})$, which is equivalent to $Y_{k}\subseteq X_{k}$.
Hence, when $k=L(X)+1$, it follows that $Y_{L(X)+1}\subseteq X_{L(X)+1}=\emptyset$, which means $L(Y)\leq L(X)$.
	\end{proof}

	Let $K_1$ and $K_2$ be two convex bodies in $\mathbb{R}^d$ such that $K_2 \subset \intr(K_1)$.
	For each point $p$ on the boundary of $K_2$, let $H_p$ be a tangent hyperplane of $K_2$ at the point $p$.
	Note that $H_p$ divides $\mathbb{R}^d$ into two closed halfspaces, say $H^+_p$ and $H^-_p$ such that $\intr(H^+_p) \cap K_2 = \emptyset$.
	The closed region $K_1 \cap H^+_p$ is a called a \df{cap} of $K_1\setminus K_2$.
	See Figure~\ref{fig:cap} for an illustration.
	Note that each cap has a corresponding hyperplane and a corresponding point on the boundary of $K_2$. 
	The following lemma shows how the number of points in a cap is related to the layer number of the given point set in $\mathbb{R}^d$.

\begin{figure}[h]
    \centerline{\includegraphics[scale=1]{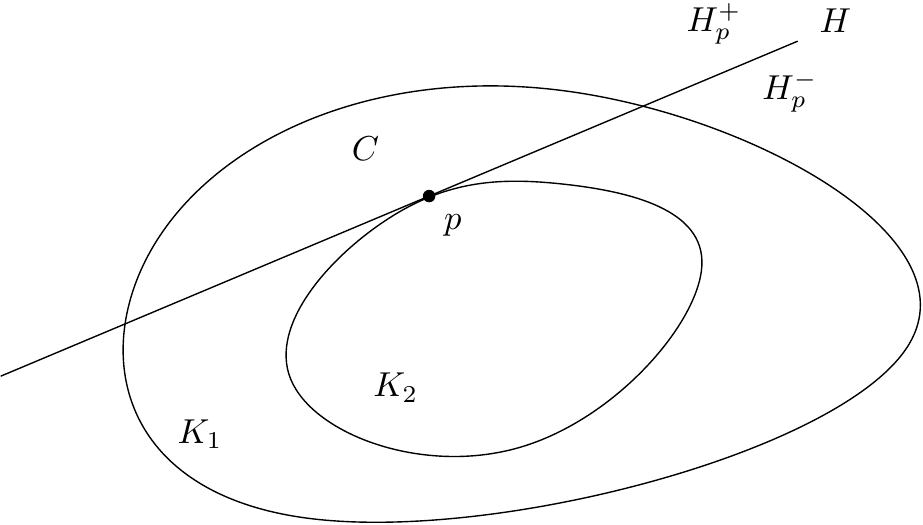}}
    \caption{A cap $C$ of $K_1 \setminus K_2$ } \label{fig:cap}
\end{figure}

\begin{lemma}\label{lem2.3}
	If $K_1$ and $K_2$ are two convex bodies in $\mathbb{R}^d$ such that $K_2\subseteq \intr(K_1)$ and $X$ is a finite point set in $K_1$, then
	$L(X) \leq \max\{|X\cap C|: C\text{ is a cap of }K_1\setminus K_2\} +L(X\cap K_2)$.
\end{lemma}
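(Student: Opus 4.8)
The idea is to peel off all points in the annular region $K_1 \setminus K_2$ within a controlled number of steps, after which only points in $K_2$ remain, and those take at most $L(X \cap K_2)$ further steps by Lemma~\ref{lem2.1}~$(b)$ (since $X \cap K_2 \subseteq X$, but more precisely we need the peeling of $X$ restricted to $K_2$ to dominate the peeling of $X \cap K_2$ — this follows from iterating Lemma~\ref{lem2.1}~$(a)$). So let $m := \max\{|X \cap C| : C \text{ a cap of } K_1 \setminus K_2\}$; I claim that after $m$ peeling steps applied to $X$, every remaining point lies in $K_2$. Granting this claim, if $\{X_i\}_i$ is the peeling sequence of $X$, then $X_{m+1} \subseteq K_2$, so $X_{m+1} \subseteq X_{m+1} \cap K_2$; combining with the fact (again from Lemma~\ref{lem2.1}~$(a)$, applied inductively along the peeling sequences) that the peeling sequence of $X_{m+1}$ reaches $\emptyset$ in at most $L(X \cap K_2)$ steps, we get $L(X) \leq m + L(X \cap K_2)$, as desired.

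To prove the claim, I would argue by contradiction: suppose some point $p \in X_{m+1}$ lies outside $K_2$, i.e.\ $p \in X_{m+1} \cap (K_1 \setminus K_2)$. Since $p \notin K_2$ and $K_2$ is convex, there is a hyperplane separating $p$ from $K_2$; sliding it toward $K_2$ until it becomes a tangent hyperplane $H_q$ of $K_2$ (at some boundary point $q$), we obtain a cap $C = K_1 \cap H_q^+$ with $p \in C$. Now I want to say that in each of the first $m$ peeling steps, at least one point of $C$ is removed, as long as $p$ has not yet been removed — this would force $p$ to be gone by step $m+1$, a contradiction. The point is that whenever $C$ still contains at least one point of the current set $X_i$ (which it does, since $p \in X_i$ for $i \le m+1$ by assumption), the subset $X_i \cap C$ is a point set lying in the halfspace $H_q^+$ on one side of $H_q$, and its extreme points in the direction away from $K_2$ — in particular the point of $X_i \cap C$ farthest from $H_q$, or any vertex of $\mathrm{conv}(X_i \cap C)$ that is a vertex of $\mathrm{conv}(X_i)$ — get peeled. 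Concretely, any vertex of $\mathrm{conv}(X_i)$ that lies in the open halfspace $\intr(H_q^+)$, if one exists, lies in $C$ and is removed at step $i$; and a vertex of $\mathrm{conv}(X_i)$ in $\intr(H_q^+)$ does exist whenever $X_i \cap \intr(H_q^+) \neq \emptyset$ (take the vertex maximizing the linear functional defining $H_q$). So at each step, either $X_i \cap \intr(H_q^+)$ becomes empty — but it cannot, since $p \in X_i$ and $p \in \intr(H_q^+)$ if we chose the separating hyperplane strictly, which we can — or a point of $X_i \cap \intr(H_q^+) \subseteq X_i \cap C$ is removed, while $p$ survives. Since $|X_1 \cap C| \le m$, after at most $m$ such steps $X_i \cap C$ would have to be empty, contradicting $p \in X_{m+1} \cap C$.

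The main obstacle, and the point requiring care, is the bookkeeping in this counting argument: one must ensure that at every step up to $m$, as long as $p$ remains, a \emph{fresh} point of $C$ (one not previously removed) is removed, so that the total count of removed points of $C$ strictly increases each step and cannot exceed $|X_1 \cap C| \le m$. This is automatic because the peeling sequence is nested and disjoint across steps ($V(X_i)$ for distinct $i$ are pairwise disjoint subsets of $X$), so "a point of $C$ is removed at step $i$" for $i = 1, \dots, m$ gives $m$ distinct points of $C$, forcing $|X \cap C| \ge m$ if $p$ survived all $m$ steps, with equality meaning $p$ itself would have to be among them — contradiction either way. One should also double-check the reduction at the end: that $L(X \cap K_2)$ (peeling the fixed set $X \cap K_2$ in isolation) indeed bounds the number of remaining steps for $X_{m+1} \subseteq K_2$; since $X_{m+1} \subseteq X \cap K_2$, Lemma~\ref{lem2.1}~$(b)$ gives $L(X_{m+1}) \le L(X \cap K_2)$, and $L(X) = m + L(X_{m+1})$ would follow if $X_{m+1}$ is exactly where peeling has reached — but we only need $L(X) \le m + L(X_{m+1})$, which holds since after $m$ steps we are left with $X_{m+1}$ and it takes $L(X_{m+1})$ more steps to exhaust it. This completes the argument.
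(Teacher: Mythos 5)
Your proposal is correct and follows essentially the same route as the paper's proof: show that any non-empty cap must contain a vertex of the current convex hull (via the supporting/separating hyperplane and the linear functional it defines), so each peeling step removes a fresh point from every non-empty cap, forcing all points of $K_1\setminus K_2$ to be gone after $\max_C |X\cap C|$ steps, and then apply Lemma~\ref{lem2.1}~$(b)$ to the remaining points inside $K_2$. The only cosmetic difference is that you organize the cap-emptying step as a contradiction argument around a surviving point $p$, while the paper states it directly for an arbitrary cap; the content is identical.
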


\begin{proof}
Let $C$ be a cap of $K_1\setminus K_2$ and let $H$ be the corresponding hyperplane of $C$. 
	Recall that $H$ divides $\mathbb{R}^d$ into two closed halfspaces $H^+$ and $H^-$ where $C=K_1\cap H^+$.

Let $\{X_i\}_i$ be the peeling sequence of $X$.
We claim that for $k\geq 1$, either $V(X_k)\cap C$ is non-empty or $X_k\cap C$ is empty.
	Suppose $X_k\cap C$ is non-empty.
	Note that $X_k\cap C = X_k\cap H^+$ since $X\subset K_1$.
	Now, it must be that $\conv(X_k)$ contains some point $p$ in $H^+$ such that $p\in V(X_k)$.
	Since $p$ must also be in $C$, it follows that $V(X_k)\cap C$ is non-empty.
	
	Now, at every peeling step, at least one point is removed from each non-empty cap.
	Let $C_0$ be a cap of $K_1 \setminus K_2$ that contains the maximum number of points of $X$.
	By repetitively applying Lemma~\ref{lem2.1} $(b)$, we conclude that $L(X) \leq |X\cap C_0|+L(X_{|X\cap C_0|+1}\cap K_2)\leq |X\cap C_0|+L(X\cap K_2)$.
\end{proof}

	Recall that a finite point set $X$ in a unit ball in $\mathbb{R}^d$  is $\alpha$-evenly distributed if there is a constant $\alpha > 1$ such that	
	\[ |X\cap D|  \leq  \lceil \alpha|X|\Vol(D) \rceil \]
	holds for every Euclidean ball $D$ with positive volume.		
	Let $\delta(X)$ be the minimum distance between two different points in $X$; namely, $\delta(X) = \min \{ \|x-y\| : x,y\in X\text{ and } x \neq y\}$.	
	The following lemma gives a sufficient condition on $\delta(X)$ in order for $X$ to be an $\alpha$-evenly distributed point set for some $\alpha>1$.

	\begin{lemma}\label{lem2.4}
	For a positive integer $d$, there exists a continuous bijection $f_d:\mathbb{R}_{>0}\to\mathbb{R}_{>1}$ such that 
	if $X$ is a finite point set in a unit ball in $\mathbb{R}^d$ satisfying  $\delta(X) \geq \beta{|X|^{-1/d}}$ for some constant $\beta>0$, 
	then $X$ is $f_d(\beta)$-evenly distributed.
\end{lemma}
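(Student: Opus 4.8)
The plan is to run a volume–packing argument: the separation hypothesis forces small disjoint balls around the points of $X$, and counting how many can fit into (a slight enlargement of) a test ball $D$ bounds $|X\cap D|$ from above. Concretely, write $n=|X|$ and $\rho=\tfrac{1}{2}\beta n^{-1/d}$. Since $\delta(X)\ge\beta n^{-1/d}=2\rho$, the open balls of radius $\rho$ centred at the points of $X$ are pairwise disjoint. If $D$ is any Euclidean ball of radius $r>0$, then for every $x\in X\cap D$ the ball of radius $\rho$ about $x$ lies in the ball concentric with $D$ of radius $r+\rho$; comparing volumes yields $|X\cap D|\,\rho^{d}\le(r+\rho)^{d}$, that is
\[ |X\cap D|\ \le\ \Bigl(1+\tfrac{r}{\rho}\Bigr)^{d}\ =\ \Bigl(1+\tfrac{2r\,n^{1/d}}{\beta}\Bigr)^{d}. \]

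Next I would convert this into the defining inequality of $\alpha$-even distribution. Let $\kappa_d$ denote the volume of the unit ball in $\mathbb{R}^{d}$ and set $t=r\,n^{1/d}$, so that $n\Vol(D)=\kappa_d t^{d}$. I would split into two cases covering all $t>0$. If $2r<\delta(X)$ — which holds in particular whenever $t<\beta/2$, since then $2r<\beta n^{-1/d}\le\delta(X)$ — then $D$ has diameter less than $\delta(X)$ and hence contains at most one point of $X$; as $\Vol(D)>0$ we have $\lceil\alpha n\Vol(D)\rceil\ge1$ for every $\alpha>0$, so the desired bound holds trivially. If instead $t\ge\beta/2$, then $2t/\beta\ge1$, hence $1+2t/\beta\le 4t/\beta$, and therefore $|X\cap D|\le(4/\beta)^{d}t^{d}=\dfrac{4^{d}}{\kappa_d\beta^{d}}\,n\Vol(D)$.

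Finally I would pin down the function. Set $f_d(\beta):=1+\dfrac{4^{d}}{\kappa_d\,\beta^{d}}$. In the second case $|X\cap D|$ is an integer at most $\frac{4^{d}}{\kappa_d\beta^{d}}\,n\Vol(D)\le f_d(\beta)\,n\Vol(D)$, hence at most $\lceil f_d(\beta)\,n\Vol(D)\rceil$; in the first case $|X\cap D|\le1\le\lceil f_d(\beta)\,n\Vol(D)\rceil$; so $X$ is $f_d(\beta)$-evenly distributed. (Here I use that $\alpha$-even distribution only weakens as $\alpha$ grows, because $\lceil\,\cdot\,\rceil$ is monotone, so an upper estimate for an admissible constant suffices.) It remains to note that $f_d$ is continuous and strictly decreasing on $(0,\infty)$, with $f_d(\beta)\to\infty$ as $\beta\to0^{+}$ and $f_d(\beta)\to1$ as $\beta\to\infty$; hence $f_d$ is a continuous bijection from $\mathbb{R}_{>0}$ onto $\mathbb{R}_{>1}$, as required.

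The main obstacle is the behaviour for very small balls: the packing estimate $\bigl(1+2r n^{1/d}/\beta\bigr)^{d}$ never drops below $1$, whereas $\lceil\alpha n\Vol(D)\rceil$ does for tiny $D$, so the packing bound alone cannot give the claim for all balls. This is resolved precisely by invoking the separation $\delta(X)$ directly: a ball of diameter less than $\delta(X)$ contains at most one point of $X$. A minor additional point is that the lemma asks for a genuine bijection onto $\mathbb{R}_{>1}$, which is what forces the explicit additive $1$ in $f_d$ together with the limit and monotonicity checks, rather than merely exhibiting some constant that works.
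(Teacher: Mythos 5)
Your proof is correct, and it rests on the same underlying idea as the paper's: a $\delta(X)$-separated set can be counted inside a ball of radius $r$ by a packing estimate of the form $\bigl(1+O(rn^{1/d}/\beta)\bigr)^d$. The mechanism differs slightly: you pack pairwise disjoint balls of radius $\tfrac12\beta n^{-1/d}$ around the points and compare volumes inside the $\rho$-enlargement of $D$, whereas the paper covers the circumscribing cube of $D$ by small cubes of side $\delta(X)/(2\sqrt d)$, each containing at most one point, and then defines $f_d(\beta)=\bigl(\tfrac{4\sqrt d}{C_d^{1/d}\beta}+1\bigr)^d$ directly from that count. Your version is arguably cleaner in one respect: you isolate the regime of very small test balls (diameter below $\delta(X)$, hence at most one point, and $\lceil\alpha n\Vol(D)\rceil\ge1$) as a separate case, which is exactly the point where a naive comparison of the packing bound against $\lceil f_d(\beta)n\Vol(D)\rceil$ would fail; the paper's final step ("$X$ is $f_d(\beta)$-evenly distributed by definition") silently relies on the floor in its intermediate bound to handle that regime. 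Your verification that $f_d(\beta)=1+4^d/(\kappa_d\beta^d)$ is a continuous strictly decreasing bijection onto $\mathbb{R}_{>1}$ is also complete and matches what the lemma demands.
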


\begin{proof}
	Let $n=|X|$ and let $\beta>0$ be a constant such that $\delta(X)\geq \beta{n^{-1/d}}$.
	Also, let $B$ be a ball of radius $r>0$ in $\mathbb{R}^d$.
	Note that the $\Vol(B)=C_d r^d$ for some positive constant $C_d$.
	A $d$-dimensional cube with side length $\frac{\delta(X)}{2\sqrt{d}}$ contains at most one point of $X$ since it fits in a ball of diameter less than $\delta(X)$. 
	Since $B$ fits in a $d$-dimensional cube with side length $2r$, 
	we have the following upper bound on $|X\cap B|$:
	\[|X\cap B|\leq \left(\left\lfloor{\frac{2r}{\delta(X)/\sqrt{4d}}}\right\rfloor + 1\right)^d\leq \left\lceil\left(\frac{4\sqrt{d}r}{\beta} n^{1/d}+1\right)^d\right\rceil.\]
	Let $f_d(\beta) = \left(\frac{4\sqrt{d}}{C_d^{1/d}\beta } + 1\right)^d$ so $f_d$ is a continuous bijection from $\mathbb{R}_{>0}$ to $\mathbb{R}_{>1}$. 
	We see that $X$ is $f_d(\beta)$-evenly distributed by definition.
\end{proof}

%%%%%%%%%%%%%%%%%%%%%%%%%%%%%%%%%%%%%%%%%%%%%%%%%%%%%%%%%%%%%%%%%%%%%%%%%%%%%%%%%%%%%%%%%

	% Section 3 - Main result
	% Planar case

\section{The layer number of an evenly distributed point set on the plane}\label{sec:planar}

\subsection{The upper bound on the plane}\label{sec:2dup}
	In this section, we give an upper bound of $O(|X|^{3/4})$ on the layer number of a point set $X$ in the class $\mathcal{C}_2(\alpha)$, that is, an $\alpha$-evenly distributed point set in a unit disk in $\mathbb{R}^2$.
	This shows the first part of Theorem~\ref{thm1.2}.

\begin{prop}\label{prop:2dup}
For a real number $\alpha > 1$, 
	if $X$ is a point set in the class $\mathcal{C}_2(\alpha)$, then $L(X) \leq O(|X|^{3/4})$.
\end{prop}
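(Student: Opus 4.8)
The plan is to iterate Lemma~\ref{lem2.3} along a nested family of concentric disks that shrink towards the center of the ambient unit disk, controlling the number of points in each cap by covering it with small balls and applying the $\alpha$-evenly distributed hypothesis to each of those balls. This adapts the shelling strategy of \cite{Dal04}, but with the probabilistic estimate on the size of a cap replaced by a deterministic covering argument. Throughout, write $n=|X|$ and let $w>0$ be a width parameter to be fixed at the end; it will turn out that $w=n^{-1/2}$ is the right choice.

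Set $m=\lceil 1/w\rceil$ and, for $0\le i\le m-1$, let $B_{r_i}$ be the disk of radius $r_i=1-iw$ centered at the center of the unit disk, so that $B_{r_0}$ is the unit disk (which contains $X$) while $r_{m-1}\in(0,w]$. Since $r_{i+1}<r_i$ we have $B_{r_{i+1}}\subseteq\intr(B_{r_i})$, so Lemma~\ref{lem2.3} applies to $X\cap B_{r_i}$ with $K_1=B_{r_i}$ and $K_2=B_{r_{i+1}}$; noting that every cap $C$ of $B_{r_i}\setminus B_{r_{i+1}}$ satisfies $C\subseteq B_{r_i}$ and that $B_{r_{i+1}}\subseteq B_{r_i}$, telescoping the resulting $m-1$ inequalities gives
\[ L(X)\le\sum_{i=0}^{m-2}\max\{\,|X\cap C|:C\text{ is a cap of }B_{r_i}\setminus B_{r_{i+1}}\,\}+L(X\cap B_{r_{m-1}}). \]
The last term is bounded by a constant depending only on $\alpha$: once $w\le n^{-1/2}$, the disk $B_{r_{m-1}}$ has radius at most $w$, so the $\alpha$-evenly distributed property gives $|X\cap B_{r_{m-1}}|\le\lceil \alpha n\pi w^2\rceil\le\lceil\alpha\pi\rceil$, hence $L(X\cap B_{r_{m-1}})=O(1)$.

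The heart of the proof is an estimate on $|X\cap C|$ for a cap $C$ of the annulus $B_{r_i}\setminus B_{r_{i+1}}$. By rotational symmetry all such caps are congruent, and an elementary computation with the tangent line at distance $r_{i+1}$ from the center shows that $C$ is a circular segment of height $r_i-r_{i+1}=w$ whose chord has length $2\sqrt{r_i^2-r_{i+1}^2}\le 2\sqrt{2w}$ (using $r_i\le 1$). Thus $C$ is contained in a rectangle of dimensions $w\times 2\sqrt{2w}$, which can be covered by $O(w^{-1/2})$ squares of side length $w$, each lying inside a ball of radius $w$. Since a ball of radius $w$ has area $\pi w^2$, the hypothesis allows at most $\lceil\alpha n\pi w^2\rceil$ points of $X$ in each such ball, so
\[ |X\cap C|\le O(w^{-1/2})\cdot\bigl(\alpha\pi n w^2+1\bigr)=O\bigl(n w^{3/2}+w^{-1/2}\bigr). \]
Plugging this into the telescoped inequality and using $m-1=O(w^{-1})$ yields $L(X)=O\bigl(n w^{1/2}+w^{-3/2}\bigr)$, and the choice $w=n^{-1/2}$ balances the two terms to give $L(X)=O(n^{3/4})$.

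The main obstacle is precisely this cap estimate, and it is here that the $\alpha$-evenly distributed structure has to be used efficiently. Bounding $|X\cap C|$ by the number of points in a single ball circumscribing $C$ is far too wasteful: such a ball has radius $\Theta(\sqrt{w})$ and area $\Theta(w)$, giving only $|X\cap C|=O(nw)$ and hence the trivial estimate $L(X)=O(n)$ after summing over the $\Theta(w^{-1})$ annuli. The gain comes from exploiting that a near-boundary cap, while of diameter $\Theta(\sqrt w)$, has \emph{width} only $w$, so it can be tiled by $\Theta(w^{-1/2})$ balls of radius equal to its width, each of area $\Theta(w^2)$ and therefore containing only $O(nw^2)$ points. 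The rest is routine bookkeeping, modulo verifying the segment geometry and checking that $w=n^{-1/2}$ is the optimal choice of width.
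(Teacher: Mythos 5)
Your proposal is correct and follows essentially the same route as the paper's proof: peel through concentric disks of width $w=n^{-1/2}$, bound each cap by enclosing it in a $w\times O(\sqrt{w})$ rectangle, cover that rectangle by $O(w^{-1/2})$ balls of radius $O(w)$ each containing $O(1)$ points by the $\alpha$-even distribution, and sum over the $O(w^{-1})$ annuli. The only (cosmetic) difference is that you keep the width $w$ as a free parameter and optimize at the end, whereas the paper fixes $w=n^{-1/2}$ from the outset.
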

\begin{proof}
	Let $X \in \mathcal{C}_2(\alpha)$ with sufficiently many points where $n = |X|$ and let $N = \lfloor \sqrt{n} \rfloor$.
	For each $j\in\{0, \ldots, N\}$, let $D_j$ be the disk in $\mathbb{R}^2$ with radius $1-\frac{j}{\sqrt{n}}$ centered at the origin;
	note that $D_0$ is the unit disk  centered at the origin.
	For each $j\in\{0, \ldots, N-1\}$, let $C_j$ be a cap of $D_j\setminus D_{j+1}$ that contains the maximum number of points of $X$ among all caps of $D_j\setminus D_{j+1}$.
See Figure~\ref{fig:thm311}.

\begin{figure}[h]
    \centerline{\includegraphics[scale=0.8]{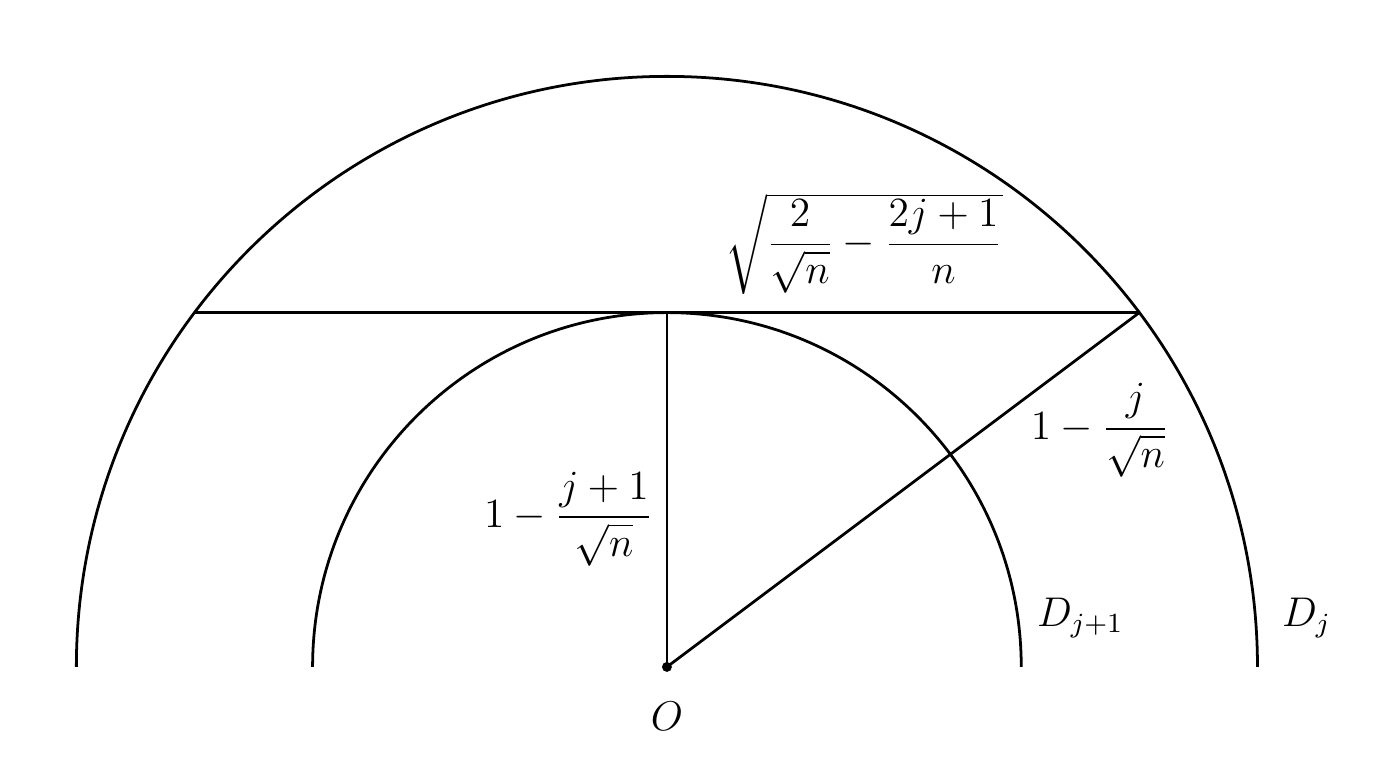}}
    \caption{Take a cap $C_j$ of $D_j \setminus D_{j+1}$ that contains the maximum number of points of $X$.} 
\label{fig:thm311}
\end{figure}
			
	Since $X$ is $\alpha$-evenly distributed in $D_0$, we have upper bounds 
	\[|X \cap B_0| \leq \lceil\alpha\pi\rceil \;\;\;\text{and}\;\;\; |X \cap B_1| \leq \left\lceil\frac{\alpha\pi}{2}\right\rceil,\]
	where $B_0$ and $B_1$ is a disk of radius ${{n^{-1/2}}}$ and $\frac{1}{\sqrt{2}}n^{-1/2}$, respectively.
	We prove the following two lemmas, which provide upper bounds on $|X\cap C_j|$ and 
	$|X\cap D_{N}|$.
		
	% Lemma 3.1
	\begin{lemma}\label{lem3.1}
	For $j\in\{0, \ldots, N-1\}$, we know $|X\cap C_j|\leq O(n^{1/4}) + O(1)$.
	\end{lemma}

	\begin{proof}
	For each $j\in\left\{0, \ldots, N-1\right\}$, the area of $C_j$ is upper bounded by the area of the smallest rectangle $R_j$ containing $C_j$.
	The length of one side of $R_j$ is $n^{-1/2}$ and the length of the other side of $R_j$ is bounded above by $2\sqrt{2}n^{-1/4}$ since $2\sqrt{2n^{-1/2} - (2j+1)/n}\leq 2\sqrt{2}n^{-1/4}$.
	By partitioning $R_j$ into $\cceil{2\sqrt{2}n^{1/4}}$ squares of side length $n^{-1/2}$, we see that $R_j$ can be covered by $\cceil{2\sqrt{2}n^{1/4}}$ disks of radii $\frac{1}{\sqrt{2}}n^{-1/2}$.
See Figure~\ref{fig:lem312}.

\begin{figure}[h]
    \centerline{\includegraphics[scale=0.75]{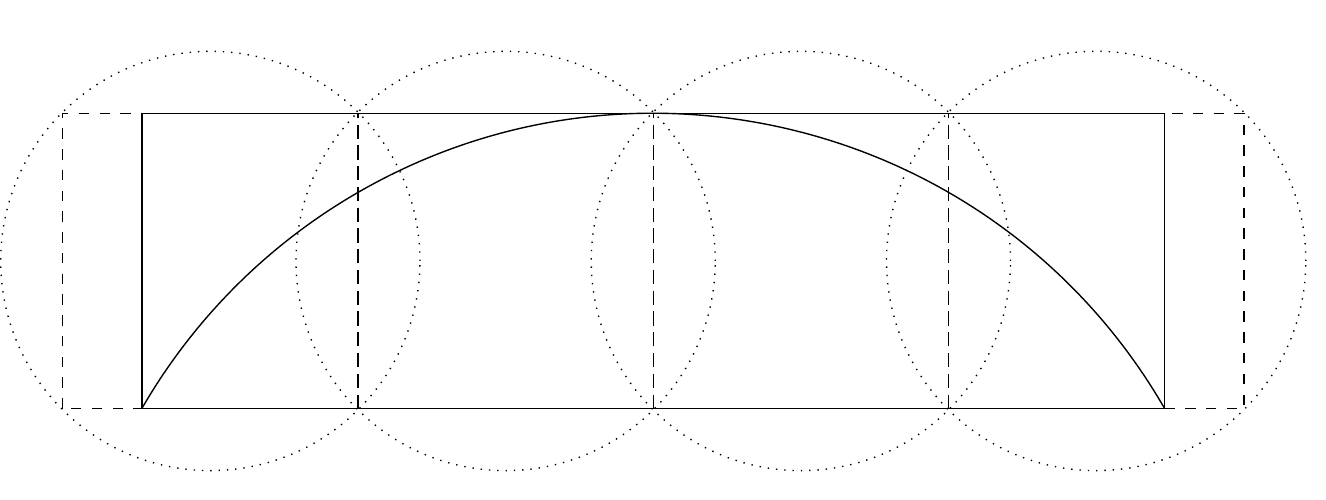}}
    \caption{A rectangle $R_j$ containing a cap $C_j$ can be covered by disks of radii $\frac{1}{\sqrt{2}}n^{-1/2}$.} \label{fig:lem312}
\end{figure}	
			
	Since each disk of radius $\frac{1}{\sqrt{2}}n^{-1/2}$ contains at most $\cceil{\frac{\alpha\pi}{2}}$ points of $X$, 
	it follows that 
	\begin{align*}
	|X\cap C_j| \quad &\leq \quad |X\cap R_j|\\
	&\leq \quad \cceil{\frac{\alpha\pi}{2}} \cceil{2\sqrt{2}n^{1/4}}\\
	&\leq\quad  \cceil{\frac{\alpha\pi}{2}} (2\sqrt{2}n^{1/4}+1)\\
	&=\quad  O(n^{1/4})+O(1).
	\end{align*}
	\end{proof}

	% Lemma 3.2
	\begin{lemma}\label{lem3.2}
		$|X\cap D_N|\leq \lceil \alpha \pi\rceil$.
	\end{lemma}
	\begin{proof}
	This is obvious since $D_N$ is a disk of radius less than $n^{-1/2}$. 
	\end{proof}

Now, 
\begin{align*}
L(X)	\quad 
&\leq \quad |X\cap D_N|+ \sum_{j=0}^{ N-1} |X\cap C_j|\tag{by Lemma~\ref{lem2.3}}\\
	&\leq\quad  \lceil\alpha\pi\rceil+N\times(O(n^{1/4})+O(1)) \tag{by Lemmas~\ref{lem3.1} and~\ref{lem3.2}}\\
	&\leq \quad \lceil\alpha\pi \rceil + n^{1/2}(O(n^{1/4})+O(1))\\
	&\leq \quad O(n^{3/4})
\end{align*}

Hence, $L(X) \leq O(|X|^{3/4})$.
		\end{proof}

\subsection{The lower bound on the plane}\label{sec:2dlow}

In this section, we prove that the upper bound in Theorem~\ref{thm1.2} is best possible.

\begin{prop}\label{thm:2dlow}
	For every real number $\alpha > 1$, there exists an $\alpha$-evenly distributed point set $X$ in a unit disk in $\mathbb{R}^2$ such that $L(X) \geq \Omega(|X|^{3/4})$.
\end{prop}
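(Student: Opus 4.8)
The plan is to construct a point set $X$ in the unit disk that is $\alpha$-evenly distributed (for some $\alpha > 1$, which by Lemma~\ref{lem2.4} only requires a lower bound of the form $\delta(X) \geq \beta |X|^{-1/2}$ on the minimum pairwise distance) and whose peeling process takes many steps. The intuition behind the exponent $3/4$ comes from the upper bound proof: a cap near the boundary of a disk of radius $1 - \Theta(n^{-1/2})$ has width $\Theta(n^{-1/4})$ and thickness $\Theta(n^{-1/2})$, so it contains $\Theta(n^{1/4})$ points of an evenly spaced set; since each peeling step removes only the outermost convex layer, and the layers can be made to shrink by only $\Theta(n^{-1/2})$ per step over $\Theta(n^{1/2})$ steps, we hope to force roughly $n^{1/2} \cdot n^{1/4}$ — no, that over-counts. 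The right heuristic: we want each peeling step to remove only $O(n^{1/4})$ points, so that with $n$ points total the process runs for $\Omega(n^{3/4})$ steps. So the construction must ensure that the convex layer at every stage is small, of size $O(n^{1/4})$.

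The construction I would use is a family of concentric near-circular polygonal layers. Concretely, partition the radial interval into $\Theta(n^{1/2})$ annuli of width $\Theta(n^{-1/2})$; within the annulus at radius $\approx 1 - j n^{-1/2}$, place points on a single circle, spaced at arc-length $\Theta(n^{-1/4})$ apart, so each such circle carries $\Theta(n^{1/4})$ points. Summing over the $\Theta(n^{1/2})$ circles that are close enough to the boundary gives only $\Theta(n^{3/4})$ points this way, so to use up all $n$ points one fills the inner region (radius bounded away from $1$) with a more generous evenly spaced pattern — but the inner region contributes only $O(n^{3/4})$ peeling steps as well, by the planar upper bound Proposition~\ref{prop:2dup} applied to a disk of radius $1/2$, say (rescaling), so it does not hurt the bound. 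The key point is arranging the outer circles so that, when peeling, the convex hull vertices at each step come from exactly one circle (or $O(1)$ consecutive circles), hence the layer number is at least the number of outer circles times a constant, i.e. $\Omega(n^{1/2}) \cdot \Omega(1)$ — again this only gives $n^{1/2}$, which is not enough.

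The correct construction must be subtler: we need the circles themselves to be peeled slowly, which means consecutive circles should be close enough (radial gap $o(n^{-1/4} \cdot (\text{arc spacing}))$, by the sagitta relation) that the convex hull of the union of the outermost several circles has its vertices spread thinly — each peeling step strips off only a bounded number of points per circle, so a single circle of $\Theta(n^{1/4})$ points takes $\Theta(n^{1/4})$ steps to clear, and if the circles are nested closely enough that no inner circle contributes a hull vertex until the circles outside it are essentially exhausted, then the total is $\Theta(n^{1/2}) \cdot \Theta(n^{1/4})$ — wait, that is $n^{3/4}$, but it double counts points since there are only $n$ of them; with $\Theta(n^{1/2})$ circles of $\Theta(n^{1/4})$ points each we have $\Theta(n^{3/4})$ points total, and clearing them in the worst order (all of one circle, then all of the next, $\ldots$) would take $\Theta(n^{3/4})$ steps only if each step removes $O(1)$ points. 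Since a circular arrangement of $m$ equally spaced points is in convex position, $L$ of a single circle is $1$, not $m$; so the slow peeling must be engineered by \emph{perturbing} the circles — replace each circle by a near-circular curve that is slightly concave in the large scale but locally convex only on short arcs — so that at each stage only $O(n^{1/4})$ points are extreme. A cleaner route, and the one I expect the authors take, is to place the points on a single spiral or on slightly perturbed concentric circles where consecutive circles are radially separated by only $\Theta(n^{-1/2})$ and angularly offset, chosen so that the convex hull of all remaining points always has $\Theta(n^{1/2})$ vertices — one from each surviving circle — wait, that would make peeling \emph{fast}. Let me instead aim for: convex hull always has only $\Theta(n^{1/4})$ vertices.

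Here is the approach I would actually carry out. First I would reduce, via scaling and Lemma~\ref{lem2.4}, to constructing a set with $\delta(X) \geq \beta n^{-1/2}$ and $L(X) \geq c\, n^{3/4}$. Then I would take $\Theta(n^{1/2})$ concentric circles $\Gamma_0, \Gamma_1, \ldots$ with radii $1 - j\, n^{-1/2}$, and on $\Gamma_j$ place $\Theta(n^{1/4})$ points \emph{not} equally spaced but clustered into $\Theta(1)$ short arcs, or more precisely arranged so that the ``effective boundary'' seen by the peeling process is flat on scale $n^{-1/4}$: the union of all the circles looks, from outside, like a convex region whose boundary is polygonal with $\Theta(n^{1/4})$ long edges, each edge tangent to $\Theta(n^{1/2})$ of the circles at once. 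Then a single peeling step removes one point from each of the $\Theta(n^{1/4})$ ``corners'', i.e. $\Theta(n^{1/4})$ points total, and it takes $\Theta(n^{1/2})$ steps to eat through the outermost circle's worth of corner points, after which the next layer of corners appears; iterating gives $\Theta(n^{1/2})$ circles each costing $\Theta(n^{1/2})$ steps to expose — no. I will stop second-guessing the exact combinatorics here; the main obstacle, and the part requiring genuine care, is precisely this: designing the positions so that (i) the minimum distance stays $\Omega(n^{-1/2})$, guaranteeing $\alpha$-even distribution via Lemma~\ref{lem2.4}, and simultaneously (ii) at \emph{every} stage of the peeling process the current convex layer has size only $O(n^{1/4})$, so that $L(X) \geq |X| / O(n^{1/4}) = \Omega(n^{3/4})$. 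Verifying (ii) — controlling the convex layer through all $\Omega(n^{3/4})$ stages — is the crux; I would handle it by an inductive/monovariant argument showing the remaining point set, after removing any prefix of layers, still has the same ``polygonal with $\Theta(n^{1/4})$ flat sides'' structure, so the bound on the layer size is maintained throughout. Filling the central disk (radius $\le 1/2$) with an arbitrary $\alpha'$-evenly distributed set of the leftover points only adds $O(n^{3/4})$ further steps by Proposition~\ref{prop:2dup}, which is harmless.
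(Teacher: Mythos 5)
There is a genuine gap: you correctly identify the target invariant --- a set of $\Theta(n)$ points whose convex layer has size $O(n^{1/4})$ at \emph{every} stage of the peeling --- but you never produce a construction that realizes it, and you explicitly defer the crux (``I would handle it by an inductive/monovariant argument \dots'') without supplying the mechanism. The device that makes this work in the paper is elementary but essential: take $M=\Theta(\sqrt{n})$ nested scaled copies $P_1\subset\cdots\subset P_M$ of a regular $k$-gon with $k=\lfloor n^{1/4}\rfloor$, and pack points \emph{collinearly} along each edge at spacing $\Theta(n^{-1/2})$. Collinearity is what bounds the layer size for free: at each peeling step an edge can lose only its two extreme surviving points, so each layer of the $j$-th polygon's point set $Q_j$ has at most $2k=O(n^{1/4})$ points and $L(Q_j)\geq\Omega(j\,n^{-1/4})$ (Claim~\ref{claim3.5}); a separate nesting claim (Claim~\ref{claim3.4}: $P_{j-1}$ lies inside the $k$-gon on the edge midpoints of $P_j$) guarantees the polygons peel one at a time, so the costs add up to $\sum_j \Omega(j\,n^{-1/4})=\Omega(n^{3/4})$. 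None of your candidate constructions (equally spaced circles, perturbed circles, spirals) supplies a substitute for this collinearity argument, and your own objections to them are correct.

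A second, quantitative problem: your bookkeeping never settles how many points live in the slow-peeling structure. In your final sketch the outer shell has $\Theta(n^{1/2})$ circles of $\Theta(n^{1/4})$ points each, i.e.\ only $\Theta(n^{3/4})$ points; with layers of size $\Theta(n^{1/4})$ that yields only $\Theta(n^{1/2})$ steps. To reach $\Omega(n^{3/4})$ steps with layers of size $n^{1/4}$ you must put $\Theta(n)$ points into the structure itself, which the paper does by letting the number of points on the $j$-th polygon grow like $\Theta(j)$. Relatedly, filling the central disk with a generic evenly distributed set and invoking Proposition~\ref{prop:2dup} there gives an \emph{upper} bound on that region's contribution, which cannot be added to a lower bound on $L(X)$; by Lemma~\ref{lem2.1}$(b)$ a lower bound must come from a subset you actually control. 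The even-distribution side of your plan (reduce to $\delta(X)\geq\beta n^{-1/2}$ via Lemma~\ref{lem2.4}) is fine and matches the paper.
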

\begin{proof}
	We give an explicit construction of an $\alpha$-evenly distributed point set $X$ in the unit disk $D$ in $\mathbb{R}^2$ centered at the origin with $|X| \leq O(n)$ and $L(X)\geq\Omega(n^{3/4})$.
	Fix $\alpha > 1$, and let $\beta> 0$ satisfy $f_2(\beta) = \alpha$, where $f_2$ is the function defined in Lemma~\ref{lem2.4}.
	Let $C = \max\{2\beta, 4\pi^2\}$ and $M = \ffloor{ \frac{\sqrt{n}}{C} }$.
	For a sufficiently large positive integer $n$, let $k = \ffloor{ n^{1/4} }$.

	Let $P_1$ be a regular $k$-gon centered at the origin such that the distance between the origin and a vertex of $P_1$ is $\frac{C}{\sqrt{n}}$.
	For each $j\in\{1, \ldots, M\}$, let $P_j$ be the scaled copy of $P_1$, centered at the origin, with scaling factor $j$.
	Let $l_j$ be the length of an edge of $P_j$.
	Since $\frac{2x}{\pi} \leq \sin{x} < x$ and $k > n^{1/4}/2$, we have
	\[\frac{j}{n^{3/4}} \leq l_j = 2 \times \frac{C}{\sqrt{n}}j \times \sin{\frac{\pi}{k}} < \frac{4C\pi j}{n^{3/4}}.\]

	Now, for each $j\in\{1, \ldots, M\}$, define a point set $Q_j$ containing all vertices of $P_j$, and in addition include as many points as possible on the boundary of $P_j$ such that the distance between two (consecutive) points on $Q_j$ is at least $\beta n^{-1/2}$.
	Finally, let \[X = \bigcup_{j=1}^{M} Q_j.\]
	See Figure~\ref{fig:thm321} for an illustration.
\begin{figure}[h]
    \centerline{\includegraphics[scale=0.25]{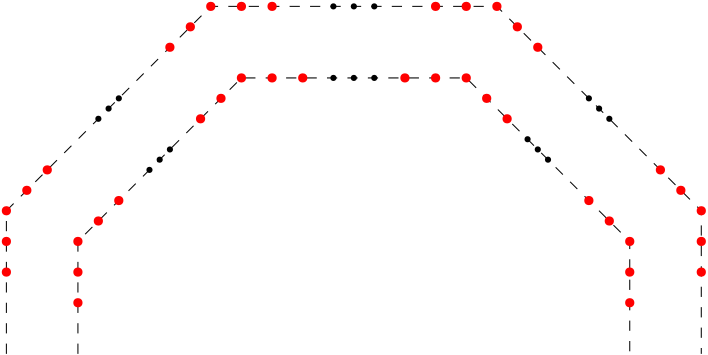}}
    \caption{An $\alpha$-evenly distributed point set $X$ in a unit disk in $\mathbb{R}^2$ such that $L(X) = \Omega(|X|^{3/4})$.} \label{fig:thm321}
\end{figure}

	Since $\cos\frac{\pi}{k} \geq \frac{1}{2}$ for sufficiently large $k$, the minimum distance between points on the boundaries of $P_j$ and $P_{j+1}$ is at least 
	\[\frac{C}{\sqrt{n}} \times \cos\frac{\pi}{k} \geq \frac{C}{2\sqrt{n}} \geq \beta n^{-1/2}.\]	
	Hence $\delta(X)\geq \beta n^{-1/2}$, and by Lemma~\ref{lem2.4}, $X$ is $\alpha$-evenly distributed in $D$.
	
\begin{claim}\label{claim3.4}
	For $j\in\{2, \ldots, M\}$, if $P'_j$ is  a regular $k$-gon whose vertices are the midpoints of edges of $P_j$, then $P_{j-1} \subset P'_j$.
\end{claim}		
\begin{proof}
	Let $O$ be the origin and let $W_1$, $V$, and $W_2$ be three consecutive vertices of $P_k$. 
For $i\in\{1, 2\}$, let $M_i$ be the midpoint of the edge ${VW_i}$. 
Let $U$ be the vertex of $P_{k-1}$ such that $O,U,V$ are collinear, and let $P$ be the point where $UV$ and $M_1 M_2$ intersect.
	It is sufficient to show that $P$ is strictly between $U$ and $V$. 
See Figure~\ref{fig:claim322} for an illustration. 

\begin{figure}[h]
    \centerline{\includegraphics[scale=0.25]{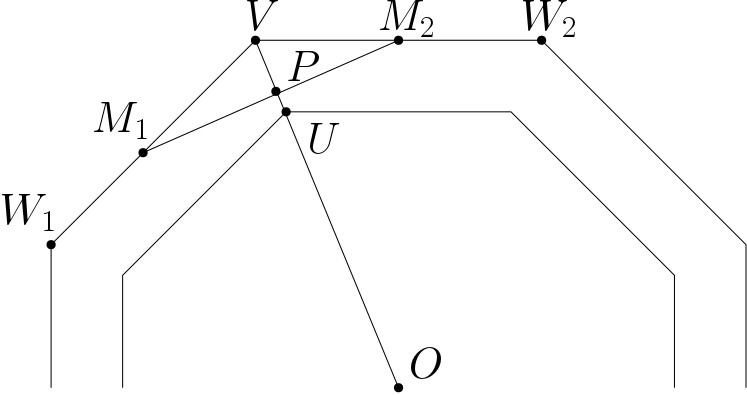}}
    \caption{An illustration of Claim~\ref{claim3.4}. $P$ exists strictly between $U$ and $V$.} \label{fig:claim322}
\end{figure}	

	Let $l\leq1$ be the length of the line segment $OV$.
	Since $\angle W_i O V = 2\pi/k$ and $W_iOV$ is an isosceles triangle for each $i\in\{1, 2\}$, we know that $\angle M_1 O V = \angle V M_1 M_2 = \pi/k$.
	Since $\sin x < x $ for $x > 0$ and $k > n^{1/4}/2$, the length of the line segment $VP$ is given by 
	\[l \times \left(\sin\frac{\pi}{k}\right)^2< 1\times \left(\frac{\pi}{k}\right)^2 = \frac{\pi^2}{k^2} < \frac{4\pi^2}{\sqrt{n}} \leq \frac{C}{\sqrt{n}}.
	\]
	Since the line segment $VP$ has length strictly less than the length of $UV$, which is $Cn^{-1/2}$, it follows that $P$ is strictly between $U$ and $V$.
\end{proof}

% Claim 3.5: the number of points and layers for Q_k
\begin{claim}\label{claim3.5}
	For $j\in\{1, \ldots, M\}$, we have $|Q_j| = \Theta(j)$ and $L(Q_j) \geq \frac{j}{4\beta n^{1/4}}$.
\end{claim}

\begin{proof}
	Recall that 
	\[\frac{j}{n^{3/4}} \leq l_j < \frac{4C\pi j}{n^{3/4}}.\]
	Since the distance between two consecutive points of $Q_j$ is between $\beta n^{-1/2}$ and $2\beta n^{-1/2}$, we obtain
	\[\frac{1}{2\beta}j \leq \frac{l_j}{2\beta n^{-1/2}}\times n^{1/4} \leq |Q_j| \leq \frac{l_j}{\beta n^{-1/2}}\times n^{1/4} \leq \frac{4C\pi}{\beta}j.\]
	Thus, we conclude that $|Q_j| = \Theta(j)$.	

	For the layer number of $Q_j$, observe that every layer of $Q_j$ contains at most two points on each edge of $P_j$.
In fact, the layer number of $Q_j$ is at least half the number of points on an edge. 
Therefore, 
	\[L(Q_j) \geq \frac{l_j/2}{2 \beta n^{-1/2}} \geq \frac{j/n^{3/4}}{4\beta n^{-1/2}} = \frac{j}{4\beta n^{1/4}}.\]
\end{proof}

% Claim3.6: final result
\begin{claim}\label{claim3.6}
	$L(X)\geq \Omega(n^{3/4})$ and $|X|= \Theta(n)$.
\end{claim}
\begin{proof}
	Note that 
	\[\sum_{j=1}^{M} j = \frac{M(M+1)}{2} = \Theta(n).\]
	Thus $|X| = \sum_{j=1}^{M} |Q_j| = \Theta(n)$.

	On the other hand, Claim~\ref{claim3.4} tells us that all points of $Q_k$ will be removed before removing a point of $Q_{k-1}$ in the peeling process of $X$. 
	By Claim~\ref{claim3.5}, we obtain the following lower bound on $L(X)$.
	\[
	L(X) \geq \sum_{j=1}^{M} \frac{j}{4\beta n^{1/4}} = \Omega(n^{3/4}).
	\]
		\end{proof}
	Since $L(X) \geq \Omega(n^{3/4})$ and $|X| =\Theta(n)$, we know that $L(X)\geq\Omega(|X|^{3/4})$.
\end{proof}

%%%%%%%%%%%%%%%%%%%%%%%%%%%%%%%%%%%%%%%%%%%%%%%%%%%%%%%%%%%%%%%%%%%%%%

%%%%%%%%%%%%%%%%%%%%%%%%%%%%%%%%%%%%%%%%%%%%%%%%%%%%%%%%%%%%%%%%%%%%%%%%%%%%%%
% Higher dim.

\section{Results in higher dimensions}\label{sec:higher}

	In this section, we generalize Proposition~\ref{prop:2dup} in Section~\ref{sec:2dup} to higher dimensions by proving Theorem~\ref{thm1.3}.
	The proof is similar to the planar case, yet, it is not a straightforward generalization. 
% Proof of higher dim case

%\subsection{The upper bound in higher dimensions}\label{sec:uphigh}
\begin{proof}[Proof of Theorem~\ref{thm1.3}]

	Fix $d\geq 3$ and let $D_0$ be a unit ball in $\mathbb{R}^d$ centered at the origin.
	Let $X$ be an $\alpha$-evenly distributed finite point set in $D_0$ with sufficiently many points and let $n=|X|$.
		The volume of a $d$-dimensional ball with radius $r>0$ is given by $\dfrac{\pi^{d/2}}{\Gamma\left(\frac{d}{2}+1\right)}r^d = \Theta(r^d)$ for a fixed $d$;
	$\Gamma$ is Euler's gamma function. 
	Hence, for a $d$-dimensional ball to have at most a constant number of points of $X$, the ball must have radius $O(n^{-1/d})$.
	Fix $k \geq 0$ and let $B_0$, $B_1$, and $B_2$ be a ball of radius $n^{-k}$, $\frac{\sqrt{d}}{2}n^{-1/d}$, and $\frac{\sqrt{d}}{2}n^{-k}$, respectively.
	Since $X$ is $\alpha$-evenly distributed in $D_0$, there exist constants $c_1, c_2, c_3, c_4 > 0$ depending only on $\alpha, d$, and $k$, but not $n$, such that 
	\[|X \cap B_0| \leq \begin{cases} c_1 & \text{ if }k > \frac{1}{d} \\ c_2 n^{1-dk} & \text{ if }0 \leq k \leq \frac{1}{d} \end{cases}, |X \cap B_1| \leq c_3 , \text{ and } |X \cap B_2| \leq c_4 n^{1-dk}.\]

	Let $N = \ffloor{ n^k }$.
	For each $j\in\{0, \ldots, N\}$, let $D_j$ be the ball of radius $1 - \frac{j}{n^k}$ centered at the origin.
	For each $j\in\{0, \ldots, N-1\}$, let $C_j$ be a cap of $D_j\setminus D_{j+1}$ that contains the maximum number of points of $X$ among all caps of $D_j\setminus D_{j+1}$.
	
	The following lemmas reveal upper bounds on $|X\cap C_j|$ and $|X\cap D_{N}|$.

	% Lemma 4.1
	\begin{lemma}\label{lem4.1}
	For $j\in\{0, \ldots, N-1\}$, we know 
	\[|X\cap C_j|\leq \begin{cases} O(n^{\frac{1}{2}-\frac{1}{2d}}) & \text{ if }k > \frac{1}{d} \\ O(n^{1-\frac{dk}{2}-\frac{k}{2}}) &\text{ if }0 \leq k \leq \frac{1}{d} \end{cases}.\]
	\end{lemma}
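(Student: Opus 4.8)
The plan is to follow the strategy of Lemma~\ref{lem3.1}: enclose the cap $C_j$ in a box, tile that box by small cubes each of which is contained in a translate of the ball $B_1$ (hence meets $X$ in at most $c_3$ points), and count the cubes. Writing $r_j = 1 - \frac{j}{n^k}$ for the radius of $D_j$, the first claim is that $C_j$ is contained in a box (rectangular parallelepiped) with one short side of length $n^{-k}$ and $d-1$ long sides of length $2\sqrt2\,n^{-k/2}$.

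To establish this, fix the tangency point $p$ of the defining hyperplane $H_p$ of $C_j$ on the boundary of $D_{j+1}$, and work in coordinates in which $\hat n$ is the outward unit normal to $H_p$ at $p$. Every point $x\in C_j = D_j\cap H_p^+$ has $\hat n$-coordinate $t$ with $r_{j+1}\le t\le r_j$, so the extent of $C_j$ in the $\hat n$-direction is at most $r_j-r_{j+1}=n^{-k}$, while the orthogonal projection of $x$ onto $H_p$ lies within distance $\sqrt{|x|^2-t^2}\le\sqrt{r_j^2-r_{j+1}^2}$ of $p$. The key estimate here is the product expansion
\[ r_j^2-r_{j+1}^2=(r_j-r_{j+1})(r_j+r_{j+1})\le \frac{2}{n^k}, \]
so the projection of $C_j$ onto $H_p$ fits inside a $(d-1)$-cube of side $2\sqrt2\,n^{-k/2}$, yielding the claimed box. (When $j+1=N$ the inner ball $D_{j+1}$ may degenerate near the origin; in that case $D_j$ itself has radius at most $2n^{-k}$ and it is cleanest to bound $|X\cap C_j|\le |X\cap D_j|\le \lceil\alpha n\,\Vol(D_j)\rceil$ directly, which is comfortably within both asserted bounds — and the same crude estimate disposes of any $j$ for which $r_{j+1}$ is too small for the argument above to be convenient.)

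Now tile the box by cubes of side $n^{-1/d}$. Each such cube has circumradius $\tfrac{\sqrt d}{2}n^{-1/d}$, hence lies in a translate of $B_1$ and meets $X$ in at most $c_3$ points, and the number of cubes is at most $\big\lceil n^{1/d-k}\big\rceil\cdot\big\lceil 2\sqrt2\,n^{1/d-k/2}\big\rceil^{\,d-1}$. Reading off the exponent is where the case split enters. If $k>\tfrac1d$, the short direction needs only one cube, the $d-1$ long directions contribute $O\big(n^{(d-1)(1/d-k/2)}\big)$ (or $O(1)$ when $k\ge 2/d$), and since $k>\tfrac1d$ one checks $(d-1)(1/d-k/2)\le\tfrac{d-1}{2d}$, so $|X\cap C_j|\le c_3\cdot O\big(n^{(d-1)/(2d)}\big)=O(n^{1/2-1/(2d)})$. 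If $0\le k\le\tfrac1d$, all $d$ directions contribute and the exponent bookkeeping
\[ \big(\tfrac1d-k\big)+(d-1)\big(\tfrac1d-\tfrac k2\big)=1-\tfrac{(d+1)k}{2} \]
gives $|X\cap C_j|\le c_3\cdot O\big(n^{1-(d+1)k/2}\big)=O(n^{1-\frac{dk}{2}-\frac k2})$, as required. I expect the only delicate point to be the cap geometry — in particular recognizing that the cap has thickness $n^{-k}$ but width of order $n^{-k/2}$, which is exactly what the expansion $r_j^2-r_{j+1}^2=(r_j-r_{j+1})(r_j+r_{j+1})$ captures — together with the degenerate caps near $j=N$; the rest is the same tiling-and-counting as in the planar Lemma~\ref{lem3.1}.
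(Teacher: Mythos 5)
Your proof is correct and follows essentially the same route as the paper: enclose $C_j$ in a box with one side $n^{-k}$ and $d-1$ sides $2\sqrt2\,n^{-k/2}$ (your expansion $r_j^2-r_{j+1}^2=(r_j-r_{j+1})(r_j+r_{j+1})$ is exactly the paper's side-length computation), tile it, cover the tiles by small balls, and multiply by the per-ball point count. The only cosmetic difference is that in the regime $0\le k\le\frac1d$ the paper tiles by boxes of side $n^{-k}$ and uses the bound $c_4 n^{1-dk}$ per ball of radius $\frac{\sqrt d}{2}n^{-k}$, whereas you tile uniformly by cubes of side $n^{-1/d}$ with $c_3$ points each; the exponents agree, so this is the same argument.
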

	\begin{proof}
	For each $j\in\{0,\ldots, N-1\}$, $C_j$ is bounded by the smallest $d$-dimensional box $R_j$ containing $C_j$.
	Similar to the proof of Lemma~\ref{lem3.1}, 
	by geometric observations we know that $R_j$ has $d-1$ sides of length $2\sqrt{2n^{-k}-(2j+1)n^{-2k}} \leq 2\sqrt{2}n^{-k/2}$ and one side of length $n^{-k}$.
	
	If $k \geq 1/d$, then by partitioning $R_j$ into at most $\cceil{ 2\sqrt{2}n^{\frac{1}{d}-\frac{k}{2}} }^{d-1}$ $d$-dimensional boxes of side length $n^{-1/d}$, we see that $R_j$ can be covered by $4^{d-1}n^{\frac{d-1}{2d}}$ balls of radii $\frac{\sqrt{d}}{2}n^{-1/d}$.
	If $0 \leq k \leq 1/d$, then by partitioning $R_j$ into at most $\cceil{ 2\sqrt{2}n^{k/2} }^{d-1}$ $d$-dimensional boxes of side length $n^{-k}$, we see that $R_j$ can be covered by $4^{d-1} n^{(d-1)k/2}$ balls of radii $\frac{\sqrt{d}}{2} n^{-k}$.
	
	It follows that 
	\begin{align*}
	|X\cap C_j| \quad &\leq\quad  |X\cap R_j|\\
	&\leq \begin{cases} c_3 4^{d-1} n^{\frac{d-1}{2d}}  & \text{ if }k \geq \frac{1}{d} \\ 
	c_4 n^{1-dk} \times 4^{d-1} n^{(d-1)k/2} &\text{ if }0 \leq k \leq \frac{1}{d} \end{cases}\\
	&\leq \begin{cases} O(n^{\frac{1}{2}-\frac{1}{2d}}) & \text{ if }k \geq \frac{1}{d} \\ 
	O(n^{1-\frac{dk}{2}-\frac{k}{2}}) &\text{ if }0 \leq k \leq \frac{1}{d} \end{cases}.
	\end{align*}
	\end{proof}

	% Lemma 4.2
	\begin{lemma}\label{lem4.2}
		$|X\cap D_N|\leq \begin{cases} O(1) & \text{ if }k \geq \frac{1}{d} \\ O(n^{1-dk}) &\text{ if }0 \leq k \leq \frac{1}{d} \end{cases}$.
	\end{lemma}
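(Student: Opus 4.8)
The plan is to prove Lemma~\ref{lem4.2} by the same argument that established Lemma~\ref{lem3.2}, namely by observing that $D_N$ is a ball whose radius is small enough that the $\alpha$-even-distribution bound applied directly to $D_N$ gives the claimed estimate. First I would compute the radius of $D_N$: since $N=\ffloor{n^k}$, the radius $1-\frac{N}{n^k}$ is strictly between $0$ and $\frac{1}{n^k}$, so $D_N$ is contained in a Euclidean ball $B$ of radius $n^{-k}$ centered at the origin. Hence $|X\cap D_N|\leq |X\cap B|=|X\cap B_0|$, and the two cases of the statement are then exactly the two cases of the bound on $|X\cap B_0|$ recorded just before the statement of Lemma~\ref{lem4.1}: if $k>\tfrac1d$ then $|X\cap B_0|\leq c_1=O(1)$, while if $0\leq k\leq\tfrac1d$ then $|X\cap B_0|\leq c_2n^{1-dk}=O(n^{1-dk})$.

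The only point requiring a word of care is the boundary case $k=\tfrac1d$, where both branches of the case split are invoked; here $n^{1-dk}=n^0=1$, so the two estimates agree and there is no inconsistency. I would also note explicitly that $D_N$ is nonempty (indeed has positive volume) because $N<n^k$ strictly, so that the $\alpha$-even-distribution inequality, which is stated for balls of positive volume, genuinely applies; alternatively one can just apply it to the slightly larger ball $B_0$ of radius exactly $n^{-k}>0$, which sidesteps any worry about $D_N$ degenerating. With this the proof is essentially a single line invoking the definition of $\alpha$-evenly distributed together with $D_N\subseteq B_0$.

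There is no real obstacle here: unlike Lemma~\ref{lem4.1}, which needs the covering-by-small-balls trick to control a cap that is thin in one direction but long in the others, the innermost region $D_N$ is a genuine ball of radius $O(n^{-k})$, so a direct application of the hypothesis suffices. The mild subtlety — essentially the only thing to get right — is bookkeeping the exponent: the radius scales like $n^{-k}$, so the volume scales like $n^{-dk}$, and multiplying by $\alpha|X|=\Theta(n)$ gives $\Theta(n^{1-dk})$, which is $O(1)$ precisely when $1-dk\leq 0$, i.e. $k\geq\tfrac1d$. This matches the stated dichotomy, and the proof concludes.

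\begin{proof}
This follows immediately from the $\alpha$-even distribution of $X$. Since $N=\ffloor{n^k}$, we have $N< n^k$, so $D_N$ is a ball centered at the origin of radius $1-\frac{N}{n^k}$, which is strictly between $0$ and $n^{-k}$. In particular $D_N$ is contained in the ball $B_0$ of radius $n^{-k}$ centered at the origin, so $|X\cap D_N|\leq |X\cap B_0|$. By the bound on $|X\cap B_0|$ established above, $|X\cap B_0|\leq c_1=O(1)$ when $k>\frac{1}{d}$, and $|X\cap B_0|\leq c_2 n^{1-dk}=O(n^{1-dk})$ when $0\leq k\leq\frac{1}{d}$; note that when $k=\frac1d$ both estimates reduce to $O(1)$. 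This gives the claimed bound on $|X\cap D_N|$.
\end{proof}
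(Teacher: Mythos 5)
Your proof is correct and is essentially the paper's own argument (the paper simply says the claim is obvious because $D_N$ is a ball of radius less than $n^{-k}$), with the containment $D_N\subseteq B_0$ and the two-case bound on $|X\cap B_0|$ spelled out. One trivial quibble: $N=\ffloor{n^k}$ gives $N<n^k$ only when $n^k\notin\mathbb{Z}$ (otherwise the radius is $0$), but as you note, bounding via $B_0$ makes this irrelevant.
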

	\begin{proof}
	This is obvious since $D_N$ is a ball of radius less than $n^{-k}$.
	\end{proof}
	
		Recall that $X$ is an $\alpha$-evenly distributed finite point set in a unit ball in $\mathbb{R}^d$. Now,
	\begin{align*}
		L(X)\quad 
	&\leq\quad |X\cap D_N|+ \sum_{j=0}^{N-1} |X\cap C_j|\tag{by Lemma~\ref{lem2.3}}\\
	&\leq\quad  \begin{cases} (\sum_{j=0}^{N-1} O(n^{\frac{1}{2} - \frac{1}{2d}})) + O(1) & \text{ if }k \geq \frac{1}{d} \\ (\sum_{j=0}^{N-1} O(n^{1 - \frac{dk}{2} - \frac{k}{2}})) + O(n^{1-dk}) &\text{ if }0 \leq k \leq \frac{1}{d} \end{cases}\tag{by Lemmas~\ref{lem4.1} and~\ref{lem4.2}}\\
	&\leq\quad  \begin{cases} O(n^{\frac{1}{2} - \frac{1}{2d} + k}) & \text{ if }k \geq \frac{1}{d} \\  O(n^{1 - \frac{dk}{2} + \frac{k}{2}}) &\text{ if }0 \leq k \leq \frac{1}{d} \end{cases}\tag{since $N \leq n^k$}\\
	&\leq\quad  O(n^{\frac{1}{2} + \frac{1}{2d}})\tag{let $k={1/d}$}		
	\end{align*}
Hence, $L(X) \leq O(|X|^{\frac{d+1}{2d}})$.
		\end{proof}

\section{Future directions and open problems}\label{sec:rmk}

	In this section, we suggest possible future research directions and open problems.

	In the planar case, we gave a construction to show tightness of the upper bound of Theorem~\ref{thm1.2}.
	For higher dimensions, it is trickier to construct such examples relying only on Euclidean geometry.
	One possible way to find such an example is to consider vector configurations.
	We conjecture that the upper bound we found for higher dimensions is also tight. 
	
\begin{conj}\label{conj1}
	For every real number $\alpha>1$ and $d \geq 3$, there exists an $\alpha$-evenly distributed point set $X$ in $\mathbb{R}^d$ such that $L(X) \geq \Omega(|X|^{\frac{d+1}{2d}})$.
\end{conj}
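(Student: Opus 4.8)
The plan is to imitate the planar construction of Proposition~\ref{thm:2dlow} in $\mathbb{R}^d$. Fix $\alpha>1$, let $\beta>0$ satisfy $f_d(\beta)=\alpha$, and choose a polytope $P_1\subset\mathbb{R}^d$ with $k$ facets of comparable diameter, circumscribed about a ball of radius $\Theta(n^{-1/d})$ centered at the origin. For $j\in\{1,\dots,M\}$ let $P_j$ be the copy of $P_1$ scaled about the origin by the factor $j$, and on each facet $F$ of $P_j$ place a $(d-1)$-dimensional grid $G_{j,F}$ of spacing $\beta n^{-1/d}$. Set $Q_j=\bigcup_F G_{j,F}$ and $X=\bigcup_{j=1}^{M}Q_j$ with $M=\Theta(n^{1/d})$. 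Then $P_M$ has circumradius $\Theta(1)$ so $X$ lies in a unit ball, a routine count gives $|Q_j|=\Theta(j^{d-1})$ and hence $|X|=\Theta(M^d)=\Theta(n)$, and, with the constants chosen appropriately, both the grid spacing and the gap between consecutive shells $\partial P_j,\partial P_{j+1}$ are $\Theta(n^{-1/d})$, so $\delta(X)\geq\beta|X|^{-1/d}$ and $X\in\mathcal{C}_d(\alpha)$ by Lemma~\ref{lem2.4}.

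The heart of the argument is the $d$-dimensional analogue of Claim~\ref{claim3.4}. Since the convex layers of a full-dimensional grid shrink monotonically toward its centroid, the convex hulls of the sets obtained from $Q_j$ by repeatedly deleting layers form a decreasing family of convex bodies that never drops below $P''_j:=\conv\{\text{centers of the facets of }P_j\}$. Provided $P_{j-1}\subset P''_j$ for every $j$, the reasoning in the proof of Claim~\ref{claim3.6} goes through and shows that peeling $X$ processes $Q_M,Q_{M-1},\dots,Q_1$ in that order, so $L(X)\geq\sum_{j=1}^{M}L(Q_j)$. The containment $P_{j-1}\subset P''_j$ reduces to a radius inequality which, just as in the planar case where it yields $k=\Theta(n^{1/4})$, forces $k=\Theta(n^{(d-1)/(2d)})$; each facet of $P_j$ then carries a grid of side length $m_j=\Theta(j\,n^{-1/(2d)})$. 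By Lemma~\ref{lem2.1}$(b)$ and Andrews' bound on the number of vertices of a lattice polytope~\cite{And63}, $L(Q_j)\geq L(G_{j,F})\geq\Omega(m_j^{\,2(d-1)/d})$, and summing over $j$ gives $L(X)\geq\Omega(n^{(2d-1)/d^2})$ with $|X|=\Theta(n)$.

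This is the obstacle, and the reason the statement is a conjecture: $(2d-1)/d^2<(d+1)/(2d)$ for every $d\geq 3$ (with equality only at $d=2$), so this family of round nested polytopes misses the target exponent, and the gap seems structural. Enlarging the facets (taking $k$ smaller) would raise each $L(Q_j)$, but the non-interference requirement that the convex layers of $Q_j$ stay outside $P_{j-1}$ --- so that the peeling of an outer shell does not reach past the inner ones before it is exhausted --- forces $k$ to be large, while a full $(d-1)$-dimensional grid on a facet is far from a layer-number maximizer and replacing it by a lower-dimensional configuration only shrinks $|Q_j|$. Escaping this appears to need abandoning round nested polytopes in favor of the viewpoint raised in Section~\ref{sec:rmk}: whether a point is a vertex of a deletion of a configuration is an oriented-matroid property, so the entire peeling process and $L(X)$ are invariants of the underlying vector configuration. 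One would therefore try to construct, combinatorially, a sequence of polytopes whose convex layers collapse only onto a ``thick'' lower-dimensional skeleton still enclosing the next polytope --- maximizing the layer depth each shell contributes under the local-sparsity budget --- and then realize it in $\mathbb{R}^d$ via Gale duality with explicit control of the minimum interpoint distance. Producing such a realizable configuration, and verifying the evenly-distributed condition for it, is where the main difficulty lies.
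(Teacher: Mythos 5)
The statement you were asked to prove is stated in the paper only as a conjecture: the authors give no proof, remarking merely that for $d\geq 3$ ``it is trickier to construct such examples relying only on Euclidean geometry'' and suggesting vector configurations as a possible route. So there is no paper proof to compare against, and your proposal --- which openly concedes at the end that it does not establish the statement --- leaves the same gap the paper does. That said, your diagnosis of \emph{why} the planar construction does not lift is quantitatively correct and worth recording. The containment condition $P_{j-1}\subset P''_j$ forces the angular size of a facet to be $O(n^{-1/(2d)})$, hence $k=\Theta(n^{(d-1)/(2d)})$ facets each carrying a $(d-1)$-dimensional grid with $m_j=\Theta(j\,n^{-1/(2d)})$ points per side, and the Andrews-type lower bound $L(G_{j,F})\geq\Omega(m_j^{2(d-1)/d})$ then sums to $\Omega(n^{(2d-1)/d^2})$. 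Since
\[
\frac{2d-1}{d^2}-\frac{d+1}{2d}=-\frac{(d-1)(d-2)}{2d^2},
\]
the two exponents coincide exactly at $d=2$ (both equal to $3/4$, recovering Proposition~\ref{thm:2dlow}) and the nested-polytope construction falls strictly short for every $d\geq 3$. This is a genuine structural obstruction, not a sloppiness in your estimates, and it is consistent with the authors' own reason for leaving the statement as a conjecture.

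Two caveats on the parts you did assert. First, the sequential-processing inequality $L(X)\geq\sum_{j}L(Q_j)$ needs more than the containment $P_{j-1}\subset P''_j$: you must show that throughout the peeling of $Q_j$ the convex hull of its \emph{residual} still contains $P_{j-1}$, which in the planar case follows because each layer removes at most two points per edge (Claim~\ref{claim3.5}), but for a $(d-1)$-dimensional facet grid the residuals collapse toward the facet centers in a way that requires its own argument; your appeal to ``layers shrink monotonically toward the centroid'' is plausible but unproven. Second, $L(Q_j)\geq L(G_{j,F})$ does follow from Lemma~\ref{lem2.1}$(b)$, but the peeling of a grid confined to a hyperplane inside a full-dimensional ambient set should be noted to agree with its intrinsic $(d-1)$-dimensional peeling. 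Neither caveat matters for the verdict, since even granting both steps the construction misses the target exponent; the conjecture remains open, exactly as the paper leaves it.
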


	In this paper, we focused on upper bounds of the layer number. 
	However, one can also ask about lower bounds.

\begin{que}\label{que1}
	For a real number $\alpha>1$ and $d\geq 2$, find a lower bound on $L(X)$ for an $\alpha$-evenly distributed finite point set $X$ in the unit ball in $\mathbb{R}^d$.
\end{que}
	For $d=2$, one can easily construct an example of an $\alpha$-evenly distributed point set $X$ on the plane such that $L(X)\geq\Omega({|X|}^{1/2})$.
	We suspect that $\Omega({|X|}^{1/d})$ is also the best possible lower bound for  $d\geq 2$.

	Finally, recall that in \cite{HPL13}, it was shown that for the $\cceil {n^{1/2} }\times \cceil {n^{1/2}} $ square grid $G_n$ in $\mathbb{R}^2$, $L(G_n) = \Theta(n^{2/3})$.
	However, it is quite challenging to generalize their proof for higher dimensions.
	The authors of \cite{HPL13} also posed the following question. 

\begin{que}[\cite{HPL13}]\label{que2}
	If $G_n ^d$ is the $\cceil {n^{1/d}}\times \cdots \times \cceil {n^{1/d} }$ square grid in $\mathbb{R}^d$ for $d \geq 3$, then what can one say about $L(G_n ^d)$?
\end{que}

	Note that the point set $G_n^d$ is an $\alpha$-evenly distributed point set in a unit ball in $\mathbb{R}^d$ for some $\alpha >1$ by applying Lemma~\ref{lem2.4} after rescaling $G_n^d$.
	By Theorem~\ref{thm1.2}, we know $L(G^d_n) \leq O(n^{2/d})$ for $d\geq 3$.
	On the other hand, the lower bound on $L(G^d_n)$ can be obtained as an immediate consequence of the following well-known result (see \cite[Theorem 13.1]{Bar08}, for example).
	\begin{theorem}[\cite{Bar08}]
	If $X \subset \mathbb{Z}^d$, then 	$|V(\conv(X))| << (\Vol(\conv(X)))^{(d-1)/(d+1)}$.
	\end{theorem}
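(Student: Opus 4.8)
The stated result is the classical theorem of Andrews, so the notation $<<$ means $O(\cdot)$, and the natural plan is to prove the equivalent reverse inequality: a polytope $P=\conv(X)$ with all vertices in $\mathbb{Z}^d$ and with $N$ vertices satisfies $\Vol(P)\geq c_d\,N^{(d+1)/(d-1)}$, which rearranges to $N\leq c_d'\,\Vol(P)^{(d-1)/(d+1)}$. I would first record the planar case as motivation, since it displays the mechanism transparently. The edge vectors of a convex lattice polygon are distinct nonzero vectors of $\mathbb{Z}^2$ whose arguments strictly increase around the boundary; because the number of lattice vectors of norm at most $R$ is $\Theta(R^2)$, realizing $N$ distinct directions forces the edges to employ vectors of norm up to $R\asymp N^{1/2}$. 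Summing their lengths shows the polygon has diameter and width of order $N^{3/2}$, hence area $\Omega(N^{3})$, which is exactly the exponent $(d-1)/(d+1)=\tfrac13$; taking all primitive vectors of bounded norm as edges shows the bound is sharp.

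For general $d$ the plan is an induction on dimension combined with a cap-covering argument. After replacing $K$ by $P=\conv(X)$ we may assume every vertex of $P$ is a lattice point. I would then invoke the economic cap covering theorem of Bárány and Larman: for a scale parameter $t>0$ there is a family of caps $C_1,\dots,C_m$ covering the boundary region of $P$ of depth $\asymp t$, each of volume $\Vol(C_i)\asymp t$, with $m=O\bigl((\Vol(P)/t)^{(d-1)/(d+1)}\bigr)$. Since a vertex of a lattice polytope cuts off a cap whose volume is bounded below by the minimal volume of a lattice simplex, every vertex is captured once $t$ is at least an absolute constant. The whole difficulty is thereby transferred to bounding the number of vertices inside a single cap of controlled volume, and then choosing $t$ to balance the two estimates.

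The main obstacle is precisely this per-cap count, and it is genuinely subtle. The vertices inside a cap are lattice points in convex position lying in a convex region of bounded volume, and one is tempted to bound them via the lattice width of the cap: a cap of small volume has small lattice width, so up to a unimodular map it lies in $O(1)$ parallel lattice hyperplanes. The trap is the ``pancake'' cap, flat and of tiny thickness but with large $(d-1)$-dimensional cross-section, which can host many vertices; such caps sit near a large near-facet of $P$. Handling them is exactly where the induction on $d$ must enter: the vertices concentrated in a flat cap essentially form a $(d-1)$-dimensional lattice polytope whose vertex count is controlled by the inductive hypothesis, and the accompanying $(d-1)$-volume must be charged against $\Vol(P)$ via a surface-area-versus-volume comparison. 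I expect the cleanest execution to require a multiscale decomposition (caps at a range of scales $t$, rather than a single constant scale) so that flat and ``round'' caps are treated separately, with the scales balanced at the end to yield the exponent $(d-1)/(d+1)$. Since the statement is quoted verbatim from \cite{Bar08}, one may of course simply cite Andrews' and Bárány's work; the sketch above indicates the route I would follow to reconstruct it.
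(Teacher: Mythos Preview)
The paper does not prove this theorem at all: it is quoted as a well-known result (Andrews' theorem, as presented in \cite[Theorem~13.1]{Bar08}) and used as a black box to deduce the lower bound $L(G_n^d)\geq\Omega(n^{2/(d+1)})$. So there is no in-paper proof to compare against; the ``paper's approach'' is precisely what you yourself suggest in your final sentence, namely to cite \cite{And63} and \cite{Bar08}.

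Your sketch of a reconstruction is a reasonable outline of the cap-covering route, and your identification of the flat-cap obstacle and the need for induction on $d$ is accurate. But none of this is needed or expected here: in the context of this paper the correct ``proof'' is the citation, and your closing remark already says as much.
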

	From the above theorem, we obtain $L(G^d_n) \geq \Omega(n^{2/(d+1)})$ since the number of points deleted in each step of the peeling process of $G^d_n$ is $O(n^{(d-1)/(d+1)})$.

	Indeed, it was shown in \cite{BB91} that the number of lattice points on a ball of radius $r>0$ in $\mathbb{R}^d$ is $\Theta(r^{(d-1)/(d+1)})$.
	Hence, we suspect that $L(G_n^d)=\Theta(n^{2/(d+1)})$, which is same as the expected layer number of a random point set $X$ with $n$ points in a unit ball of $\mathbb{R}^d$~\cite{Dal04}.

\section*{Acknowledgments}
The authors thank Andreas Holmsen for helpful comments and development of this project in the early stages, and also Joseph Briggs for improving the readability of the manuscript.
We thank the referees for pointing out the various mistakes and providing valuable comments, which improved the quality of the manuscript.

\end{document}